\newcommand{\ol}{\overline}
\def\an{\mathrm{an}}
\def\N{{{\mathbb N}}}
\def\A{{{\mathbb A}}}
\def\Z{{{\mathbb Z}}}
\def\Q{{{\mathbb Q}}}
\def\R{{{\mathbb R}}}
\def\C{{{\mathbb C}}}
\def\P{{{\mathbb P}}}
\def\G{{{\mathbb G}}}
\newcommand{\Qbar}{\overline{\Q}}
\def\H{\mathcal{H}}
\def\V{\mathcal{V}}
\def\E{{\mathcal E}}
\def\Ex{{\mathcal Ex}}
\def\F{{\mathcal F}}
\def\T{{\mathcal T}}
\def\1C{{{\mathcal C}}}
\def\X{{{\mathcal X}}}
\def \z {\mathbf{z}}
\newcommand{\bA}{\mathbb{A}}
\newcommand{\bG}{\mathbb{G}}
\newcommand{\cA}{\mathcal{A}}
\newcommand{\cO}{\mathcal{O}}
\newcommand{\cF}{\mathcal{F}}
\newcommand{\ra}{\rightarrow}
\def\v{{\bf v}}
\def\e{{\bf e}}
\theoremstyle{plain} %text of this environment is typesetted in italics
\newtheorem{theorem}{\indent\sc Theorem}[section]
\newtheorem{lemma}[theorem]{\indent\sc Lemma}
\newtheorem{corollary}[theorem]{\indent\sc Corollary}
\theoremstyle{definition} %text of this environment is typesetted in roman letters
\newtheorem{definition}[theorem]{\indent\sc Definition}
\newtheorem{remark}[theorem]{\indent\sc Remark}
\newtheorem{conj}{Conjecture}[section]
\newtheorem{thm}{Theorem}[section]
\def\address#1#2{\begingroup
\noindent\parbox[t]{7.8cm}{%
\small{\scshape\ignorespaces#1}\par\vskip1ex
\noindent\small{\itshape E-mail address}%
\/: #2\par\vskip4ex}\hfill%
\endgroup}%
\author{Pietro Corvaja, Jacob Tsimerman, Umberto Zannier}
\title{\uppercase{Finite Orbits in Surfaces with a Double Elliptic Fibration and torsion values of sections}}
\date{\today}
\begin{document}

\maketitle

%%%%%%%%%%%%%%% footnote %%%%%%%%%%%%%%%%
%%\footnote{ %2000 MSC numbers2000 \textit{Mathematics Subject Classification}. Primary 11G35; Secondary 14G25, 32H30, 11J97.}

%\footnote{ %acknowledgment of support etc. if any
%$^{*}$Thanks.
%}
%%%%%%%%%%%%%%%%%%%%%%%%%%%%%%%%%%%%%%%%%
\begin{abstract}
 {We consider surfaces with a double elliptic fibration, with two sections. We study the orbits under the induced translation automorphisms proving that, under natural conditions, the finite orbits are confined to a curve. This goes in a similar direction of (and is motivated by) recent work by Cantat-Dujardin \cite{CD}, although we use very different methods and obtain related but different results.
 
 As a sample of application of similar arguments,  we prove a new case  of the Zilber-Pink conjecture, namely Theorem \ref{T.new1}, for certain schemes over a 2-dimensional base, which was known to  lead to  substantial difficulties. 
 
Most results rely, among other things,  on recent theorems by Bakker and the second author of `Ax-Schanuel type'; we also relate a functional condition with a theorem of Shioda on unramified sections of the Legendre scheme. For one of our proofs, we   also use recent height inequalities by Yuan-Zhang \cite{YZ} (or those by Dimitrov-Gao-Habegger \cite{DimGaoHabegger}) .  

Finally, in an appendix, we show that the Relative Manin-Mumford Conjecture over the complex number field is equivalent to its version over the field of algebraic numbers.}
\end{abstract}

\section{Introduction} \label{S.Int}

This paper originally arose through work of S. Cantat and R. Dujardin, who,  in the paper \cite{CD}, study {\it non elementary} subgroups of automorphisms of a  projective surface, and especially the orbits. They prove various results which confine the  finite orbits to a finite union of curves, under appropriate conditions. 

\medskip

{\bf Double elliptic fibration}. Significant  cases arise when the surface $\X$ is endowed with a
 double elliptic fibration;  namely, we have  two rational maps $\lambda,\mu:\X\to \P_1$,  such that for each of them the general fiber is an elliptic curve  on $\X$, with a zero section.\footnote{Note that allowing one of the base curves to be of positive genus would imply the other scheme  to be isotrivial, since the existence of a non constant morphism from a genus one curve to a positive genus curve implies that the genus one curve is a factor of the jacobian of the target curve.} We assume that the morphism $(\lambda,\mu): \X \to \P_1^2$ has finite degree, which amounts to saying that the two elliptic fibrations are distinct. 
  They correspond to two elliptic schemes over a rational  base, denoted $\E,\F$ respectively, with fibers denoted $E_a, F_b$. 
  
  We assume throughout for simplicity that none of these schemes is isotrivial. 
  
  We can also assume that $\X$ is smooth and projective.

\medskip

We further suppose to have two sections  $\sigma,\tau$, resp.  for $\E,\F$, none of which is torsion.

These sections define rational automorphisms $\tilde\sigma,\tilde\tau$ of $\X$, e.g. putting
\begin{equation}
\tilde\sigma(p):=p+\sigma(\lambda(p)),\qquad  p\in \X,
\end{equation}
where the addition is meant on $E_{\lambda(p)}$, and similarly for $\tau$ in place of $\sigma$.

This situation is a key example of the context of \cite{CD}, where as mentioned above the authors prove that all the points outside a suitable curve have infinite orbit under the relevant group, which in the present case would be the one generated by  $\tilde{\sigma},\tilde{\tau}$.  

{\bf Goals}. One of the goals of this paper is to prove more precise conclusions in these  cases, using moreover a completely different method. Specifically, we shall show that already the orbit of a point under   a `small' part of the group is infinite, if the point lies outside a prescribed finite union of curves.

%VEDI LETTERA CANTAT 29 sett. 2021

\medskip

{\bf An example}. A nice well-known example of the  situation is provided by the quartic Fermat surface in $\P_3$ given by $x^4+y^4=z^4+w^4$. (See e.g. the paper \cite{CZ2} for a study of arithmetic properties of that surface exploiting the presence of a double fibration.)  Swinnerton-Dyer already constructed elliptic fibrations with sections, as follows: for a line on $\X$ (there are 48 of them), consider the pencil of planes through it, which intersects $\X$ in the line plus a cubic curve in the plane. By considering another line, its intersection with the cubic provides a section, which can be taken to be the zero section, so we have an elliptic family. Still another line gives another section.

The automorphisms which arise in this situation may be shown  to extend regularly to $\X$.    However our method (with appropriate definitions) works even for rational automorphisms which cannot be extended to regular ones. 

\medskip

{\bf Higher dimensions ?} In this paper we consider only the case of surfaces,   motivated by the mentioned work of Cantat-Dujardin. However, in principle the methods would apply also to analogous situations in higher dimensions, for instance to the case of $n$ elliptic fibrations on a variety of dimension $n\geq 3$. These can be easily constructed, for instance by taking three 2-nets  of elliptic curves in $\P_3$.  
 
But in  dimension $\ge 3$ there are other possibilities; for instance  in dimension $3$  we could consider both elliptic fibrations over a surface and  fibrations over a curve by  abelian surfaces (possibly in the birational sense). We believe that the method of this paper may be applied to such general context, using the recent results of Yuan-Zhang on heights in place of the Silverman-Tate bounds. We leave this problem for future work and we limit to the present case,  that represents our original motivation.

\medskip

Going back to our situation, 
 we denote by $\Gamma$ the group generated by $\tilde\sigma,\tilde\tau$, and we are interested in the set of points of $\X$ having {\it finite} orbit through $\Gamma$. 
      We  assume from now on that all of these objects are defined over a number field $K$.

\medskip

{\bf About the automorphisms}. 
Generally, these automorphisms are not defined everywhere, but  each of them is  well-defined outside a finite set. Also, they might contract some (rational) curve to a points; this last fact is due to the fact that one cannot in general suppose that both elliptic fibrations are relatively minimal.  In view of this possible phenomenon, we avoid points of indeterminacy by giving the definitions which follow.

For a point $p\in \X$ we set 
\begin{equation}\label{E.orbit}
O(p)=\{\tilde\sigma^m\tilde\tau^n(p): \quad m,n\in\N,\quad  \hbox{$\tilde\sigma^m\tilde\tau^n$ is defined at $p$}\}.
\end{equation}

Our attention is on {\bf  the set of $p$ such that $O(p)$ is finite}, 
for which we shall  prove the following 

\begin{theorem} \label{T.main}  The points $p\in \X$ such that $O(p)$ is finite lie in a finite union of curves of type $F_x$.
\end{theorem}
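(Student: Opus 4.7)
The plan is to characterize the finite-orbit condition by two torsion requirements, reduce the desired finiteness of $\mu(S)$ (with $S:=\{p:O(p)\text{ finite}\}$) to a relative Manin-Mumford statement for the abelian scheme $\E\times\F$, and finally rule out the exceptional cases via Shioda's theorem and Bakker-Tsimerman Ax-Schanuel.

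First I would observe that $\tilde\sigma$ preserves every $\lambda$-fiber $E_a$, acting by translation by $\sigma(a)$, and dually $\tilde\tau$ preserves each $F_b$ by translation by $\tau(b)$. Consequently $O(p)$ is finite if and only if both (a) $\tau(\mu(p))$ is torsion on $F_{\mu(p)}$, of some order $N$, and (b) $\sigma(\lambda(\tilde\tau^n(p)))$ is torsion on $E_{\lambda(\tilde\tau^n(p))}$ for each $n=0,1,\ldots,N-1$. Condition (a) confines the $\tilde\tau$-orbit of $p$ to at most $N$ points on $F_{\mu(p)}$, so (b) is a finite list of torsion conditions.

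Set $S$ as above and suppose for contradiction that $\mu(S)\subset\P_1$ is infinite. The Zariski closure $\ol{S}$ then contains an irreducible curve $Z$ with $\mu|_Z$ non-constant and $S\cap Z$ Zariski dense in $Z$. I would split into two cases according to whether $\lambda|_Z$ is constant. If $\lambda|_Z$ is non-constant, then by (a) and the $n=0$ instance of (b), the morphism $\Phi:Z\to\E\times\F$, $z\mapsto(\sigma(\lambda(z)),\tau(\mu(z)))$, sends a Zariski-dense subset of $Z$ to torsion points of $\E\times\F$. Pulling back to the base curve $C:=(\lambda,\mu)(Z)\subset\P_1^2$, the image $\Phi(Z)$ is a curve in the abelian surface scheme $\E|_C\times_C\F|_C$ containing infinitely many torsion points. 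The relative Manin-Mumford theorem for abelian schemes over a curve (Masser-Zannier, Barroero-Capuano for mixed products) then forces $\Phi(Z)$ into a torsion translate of a proper subgroup scheme; since $\sigma,\tau$ are non-torsion, the horizontal subgroup schemes $\E|_C\times 0$ and $0\times\F|_C$ are excluded, leaving only the case that $\E|_C$ and $\F|_C$ are isogenous via some $\phi$ with $\tau\circ\mu|_Z\equiv\phi(\sigma\circ\lambda|_Z)$ modulo a torsion section on $Z$. If $\lambda|_Z$ is instead constant then $Z$ is a fiber $E_a$ with $\sigma(a)$ torsion; the $n=1$ instance of (b) provides a second, non-trivial torsion condition $p\mapsto\sigma(\lambda(\tilde\tau(p)))$ on $E_a$, and Masser-Zannier applied to the resulting pair of independent sections of the relevant rank-$2$ abelian scheme over $E_a$ reduces this case to the same isogeny/functional-identity dichotomy.

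The main obstacle is the final step: ruling out the functional identity $\tau\circ\mu|_Z\equiv\phi(\sigma\circ\lambda|_Z)$ (modulo torsion) on $Z$. Such an identity encodes an unexpected algebraic coincidence between the two a priori independent fibrations, which should be incompatible with non-isotriviality and the non-torsion-ness of $\sigma,\tau$. I would dispose of it by combining Shioda's theorem on unramified sections of the Legendre scheme (which forces any section of the relevant shape to be torsion, contradicting $\tau$ non-torsion) with the Ax-Schanuel theorem of Bakker and the second author (which controls algebraic relations between period maps of distinct non-isotrivial elliptic families). This is the step where the deepest inputs of the paper are brought to bear, and it is the delicate part of the argument; everything before it is reasonably routine reduction.
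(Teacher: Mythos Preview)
Your reduction has a gap at the very first move. You claim that if $\mu(S)$ is infinite then $\overline S$ contains an irreducible curve $Z$ with $S\cap Z$ Zariski dense in $Z$ and $\mu|_Z$ non-constant. But nothing prevents $\overline S=\X$: the set $S$ is contained in $T_\lambda\cap T_\mu$, a countable set which the paper explicitly expects to be Zariski-dense in $\X$. If $\overline S=\X$ then every curve lies in $\overline S$, yet $S$ need not meet any fixed curve in infinitely many points. Two torsion conditions on a \emph{surface} base are exactly the expected codimension, so you cannot invoke Masser--Zannier over a one-dimensional base until you have already confined $S$ to a curve---which is precisely what the theorem asserts. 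Your argument presupposes the conclusion.

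The paper's proof is entirely different and, in particular, uses neither relative Manin--Mumford, nor Shioda's theorem, nor Bakker--Tsimerman for Theorem~\ref{T.main}; those tools are reserved for Theorems~\ref{T.BT} and~\ref{T.new1}. Instead it argues point by point. For $p\in S$ let $m$ be the order of $\tau(b)$ on $F_b$, $b=\mu(p)$; the aim is to bound $m$ absolutely. One regards $F_b$ itself as the base curve of two elliptic schemes $z\mapsto E_{\lambda(z)}$ and $z\mapsto E_{\lambda(z+\tau(b))}$ with sections $s_0,s_1$ induced by $\sigma$; for large $m$ these schemes are non-isogenous by a bad-reduction count. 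A Pila--Wilkie count on the Betti coordinates---carried out with explicit \emph{uniformity} in the parameter $b$, which is the main technical novelty---then yields a contradiction for large $m$, via a case split on whether some torsion order $n_r$ exceeds $m^5$. The algebraic part is disposed of by Andr\'e's theorem in the first case and by a direct monodromy computation together with Manin's theorem in the second. Conceptually, the conjugates of $p$ over $K(b)$ (in the first case) or the translates $p+r\tau(b)$ (in the second) already supply the many simultaneous torsion points on the \emph{actual} curve $F_b$ that you were hoping to extract from a hypothetical $Z$.
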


Under mild assumptions, the theorem implies actually  the finiteness of the set of points whose $\Gamma$-orbit is finite.  For instance, more precisely, we can deduce the

\begin{corollary}\label{C.main}  Suppose that the map $(\lambda,\mu):\X\to\P_1^2$ is finite. 
Then there are only finitely many points $p\in \X$ such that all sets $O(\tilde\sigma^h(p))$, $h\in\N$, are finite. 
\end{corollary}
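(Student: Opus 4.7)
The plan is to combine Theorem \ref{T.main}, applied uniformly along the $\tilde\sigma$-iterates of $p$, with the finiteness of the product map $(\lambda,\mu)$, and then reduce the remaining problem on each exceptional fiber $F_{x_i}$ to a case of relative Manin--Mumford via Masser--Zannier.

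First I would use the hypothesis that $O(\tilde\sigma^h(p))$ is finite for every $h\in\N$ to deduce, via Theorem \ref{T.main}, that all iterates $\tilde\sigma^h(p)$ belong to the same finite union $\bigcup_{i=1}^n F_{x_i}$ (depending only on the given data, not on $p$ or $h$). Because $\tilde\sigma$ translates within fibers of $\lambda$, one has $\lambda(\tilde\sigma^h(p))=\lambda(p)=:a$ for all $h$, so
\[
\{\tilde\sigma^h(p) \ :\ h\in\N\}\ \subseteq\ E_a\ \cap\ \bigcup_{i=1}^n F_{x_i},
\]
and finiteness of $(\lambda,\mu)$ forces each intersection $E_a\cap F_{x_i}$ to be a finite set. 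Hence the $\tilde\sigma$-orbit of $p$ on $E_a$ is finite; since $\tilde\sigma$ acts on $E_a$ by translation by $\sigma(a)$, this forces $\sigma(a)$ to be a torsion point of $E_a$. Taking $h=0$ also places $p$ on $F_{x_i}$ for some $i$.

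Next, fix such an $i$ and consider the non-constant finite morphism $\phi:=\lambda|_{F_{x_i}}:F_{x_i}\to\P_1$ (working on the normalization if $F_{x_i}$ happens to be singular). The pulled-back elliptic scheme $\phi^*\E\to F_{x_i}$ remains non-isotrivial, and the pulled-back section $\sigma\circ\phi$ is non-torsion: an identity $n(\sigma\circ\phi)\equiv 0$ on $F_{x_i}$ would, by Zariski density of the image of $\phi$, give $n\sigma\equiv 0$ on $\P_1$, contradicting the standing non-torsion assumption on $\sigma$. The conclusion of the previous step then says that every $p$ in question is a point of $F_{x_i}$ at which $\sigma\circ\phi$ takes a torsion value in $E_{\phi(p)}$. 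By the Masser--Zannier relative Manin--Mumford theorem for a non-torsion section of a non-isotrivial elliptic scheme over a curve, such points form a finite subset of $F_{x_i}$. Summing over the finitely many indices $i$ yields the desired finiteness. The main non-elementary input, beyond Theorem \ref{T.main} itself, is precisely this application of Masser--Zannier; every other step is a direct unpacking of the double-fibration geometry.
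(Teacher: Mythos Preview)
Your argument is fine up to the point where you conclude that $\sigma(a)$ is torsion on $E_a$ and that $p$ lies on one of the finitely many fibers $F_{x_i}$. The error is in the last step: there is no Masser--Zannier theorem asserting that a \emph{single} non-torsion section of a non-isotrivial elliptic scheme over a curve takes torsion values at only finitely many points. On the contrary, that set is always countably infinite (for each $n$ the equation $n\sigma=0$ cuts out finitely many points, and letting $n$ vary gives infinitely many). The Masser--Zannier results you have in mind require at least two independent conditions --- e.g.\ two non-torsion sections on non-isogenous elliptic schemes being simultaneously torsion. So your reduction to ``$p\in F_{x_i}$ and $\sigma(\lambda(p))$ torsion'' does not yet give finiteness, and the proof as written has a genuine gap.

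The paper closes the argument differently and without any appeal to Manin--Mumford: assuming the set of such $p$ is infinite, infinitely many lie on some component $F'$ of a fixed $F_{x_i}$. Since every $\tilde\sigma^h(p)$ also lies in the finite union $\bigcup F_{x_j}$, each curve $\tilde\sigma^h(F')$ is contained in that union, and as only finitely many $\tilde\sigma$-translates of $F'$ can be $\mu$-fibers (else the $\mu$-fibration would be isotrivial), some nonzero power $\tilde\sigma^q$ stabilises $F'$. By finiteness of $(\lambda,\mu)$ the map $\lambda\colon F'\to\P_1$ is surjective, so one can choose $p'\in F'$ with $q\sigma(\lambda(p'))$ non-torsion; then the $\tilde\sigma^q$-orbit of $p'$ is infinite but sits inside the finite set $F'\cap E_{\lambda(p')}$, a contradiction. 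You could splice this ending onto your first paragraph to obtain a correct proof.
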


\begin{proof} [Deduction of the corollary from the theorem.] Note that indeed the corollary is an immediate consequence of the theorem,   applied with $\tilde\sigma^h(p)$ in place of $p$. To prove this, note that  if $p$ is one of the points relevant in the corollary, all the $\tilde\sigma^h(p)$, $h\in\N$, lie in a certain finite union  $U=\bigcup_{x\in A}F_x$, where $A$ is a finite subset of $\P_1$. Now, since the fibrations are distinct by assumption,   for any $x$  only finitely many  curves $\tilde\sigma^h(F_x)$ can be  of the shape $F_y$: otherwise infinitely many fibers of $\mu$ would be isomorphic and the $\mu$-fibration would be isotrivial, which is excluded by the assumptions.  

Suppose that our set is infinite, then its intersection with some component $F'$ of an $F_a$ would be infinite, for suitable $a\in A$. Then for each $h\in\N$ the curve $\tilde\sigma^h(F')$ would be inside $U$. But then some nonzero power $\tilde\sigma^q$ would stabilise $F'$. On the other hand, by the assumption the function $\lambda:F'\to \P_1$ is generically surjective, implying that $F'$ contains a point $p'$ such that $\tilde\sigma^q$ has infinite order at $p'$. But this orbit lies in the intersection of $F'$ with the $\lambda$-fiber at $p'$, and this intersection is finite by assumption. The contradiction proves the assertion.
\end{proof}

 Actually, this argument shows that for the same conclusion it suffices to let $h$ run through a sufficiently large set of integers (rather than the whole $\N$).

Also, the corollary asserts that

{\it  There are only finitely many  points $p\in \X$ such that the set $\{\tilde\sigma^m\tilde \tau^n\tilde \sigma^h(p), m,n,h\in\Z\}$ is finite. }

\medskip 
 
 \subsection{A kind of  improvement}
 We can formulate the above situation in the following terms: we have a scheme over an open  subset $\X_0$ of $\X$ with fibers given as products of three elliptic curves. Namely, above a point $x\in\X_0$ we associate the product of elliptic curves
$$
E_{\lambda(x)}\times F_{\mu(\tilde\sigma(x))}\times E_{\lambda(\tilde\tau\circ\tilde\sigma(x)))}.
$$
Here $X_0\subset X$ is obtained by removing  from $\X$ the subset where some of the relevant maps is not defined, or where some elliptic curve degenerates; hence $X_0$ is the complement of finitely many curves and points in $X$.

 For simplicity of notation, we denote by $\E_i\to \X_0$, $i=1,2,3$, the corresponding elliptic schemes.

We have an obvious section  $\xi$ given by  
$$
\xi(p)=\sigma(p)\times \tau(\tilde\sigma(p))\times \sigma((\tilde\tau\circ\tilde\sigma)(p)),
$$
where we simplified the notation by writing $\sigma(p)$ for $\sigma(\lambda(p))$, $\tau(p)$ for $\tau(\mu(p))$.

Using a recent result of Bakker with the second author, we shall prove a variation on Theorem \ref{T.main}, which is stronger in a sense.

We start by noting that the orbit $O(p)$ of a point $p$ is certainly infinite if either $\sigma(p)$ or $\tau(p)$ has infinite order on the relevant elliptic curve. Then it makes sense to study the points $p$ where both sections have finite order.
\medskip

\begin{definition}
 We define the sets $T_\lambda,T_\mu$ by setting  
\begin{equation*}
T_\lambda=\{x\in \X_0: E_{\lambda(x)} 
%\mathrm{\ is\ an \ elliptic\ curve\ and\ } 
\sigma( x)\ \hbox{has finite order on $E_{\lambda(x)}$}\}
\end{equation*}
 and similarly for $\tau,\mu$.  
 \end{definition}

Each of these sets is known to be a  countable infinite union of curves (this is not obvious, but may be proved e.g. via the non-constancy of the so-called Betti map, or see \cite{CDMZ}, \cite{Z}). The intersection  $T_\lambda\cap T_\mu$ is then expected to be a countable Zariski-dense set. Similarly, the set of points $x\in T_\lambda$ such that $\tilde{\sigma}(x)\in T_\mu$ is again Zariski-dense. 

We want to show that if we add a third condition we end up with a degeneracy conclusion. With this in mind we define the exceptional set

\begin{equation}
\Ex:=\{x\in \X :  x\in T_\lambda,\  \tilde\sigma(x)\in T_\mu,\ \tilde\tau\circ\tilde\sigma(x)\in T_\lambda\}.
\end{equation}

\medskip

\begin{theorem}\label{T.BT} 
The set $\Ex$ is contained in a finite union of curves.  
\end{theorem}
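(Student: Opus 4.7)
The plan is to recast $\Ex$ as the torsion locus of the section $\xi$ of the fibered product abelian scheme $\Aa := \E_1 \times_{\X_0} \E_2 \times_{\X_0} \E_3$ over the $2$-dimensional base $\X_0$ (whose fibers have dimension $3$), and follow the usual Pila-Zannier/Relative Manin-Mumford dichotomy, with the Bakker-Tsimerman Ax-Schanuel theorem (quoted in the introduction) replacing the classical transcendence input. Since the torsion cosets of $\Aa$ form a countable family of codimension $3$ and $\xi(\X_0)$ has dimension $2$ inside the $5$-dimensional $\Aa$, the expected dimension of $\Ex$ is $-1$, so confining $\Ex$ to a curve is a Zilber-Pink style statement.

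Suppose for contradiction that the Zariski closure of $\Ex$ contains a $2$-dimensional irreducible component $Y \subseteq \X_0$. On any simply-connected analytic open $U \subseteq Y^{\an}$, the Betti map of $\Aa$ pulled back along $\xi$ gives a real-analytic map $\beta_\xi : U \to (\R/\Z)^6$ with $\beta_\xi(\Ex \cap U) \subseteq (\Q/\Z)^6$. If $\beta_\xi|_Y$ had generic real rank equal to the maximum value $4 = 2\dim Y$, a definable Pila-Wilkie count would confine the preimage of $(\Q/\Z)^6$ to a proper subvariety of $Y$, contradicting the Zariski density of $\Ex \cap Y$. Hence the rank is deficient, and the Bakker-Tsimerman Ax-Schanuel theorem, applied to the image $\xi(Y) \subseteq \Aa$, forces $\xi(Y)$ to be contained in a proper weakly special subvariety $W \subseteq \Aa$. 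Modulo a torsion translate, $W$ is the pullback of a proper abelian subscheme of $\Aa|_Y$, so that $\xi|_Y$ satisfies a non-trivial group-theoretic identity
$$
\phi_1(\xi_1) + \phi_2(\xi_2) + \phi_3(\xi_3) = 0 \quad (\text{mod torsion}) \quad \text{on } Y,
$$
where each $\phi_i : \E_i|_Y \to E'$ is an isogeny (or the zero homomorphism) into a common auxiliary elliptic scheme $E'$ over $Y$.

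It remains to rule out every such relation. If only a single $\phi_i$ is non-zero, the identity forces one of the sections $\sigma, \tau$ to become torsion on a Zariski-open subset of $\P_1$, contradicting the standing non-torsion assumption; here we use that each of $\lambda|_Y$, $(\mu \circ \tilde\sigma)|_Y$ and $(\lambda \circ \tilde\tau \tilde\sigma)|_Y$ is dominant onto $\P_1$, since otherwise the $2$-dimensional $Y$ would be contained in finitely many fibres of a $1$-parameter family and hence in a proper curve. A mixed relation coupling $\xi_2$ (a value on $\F$) to $\xi_1$ or $\xi_3$ (values on $\E$) requires a generic isogeny between fibres of $\E$ and $\F$; pulled back along the dominant map $(\lambda, \mu \circ \tilde\sigma) : Y \to \P_1 \times \P_1$, it would force $\E$ and $\F$ to be generically isogenous over $\P_1 \times \P_1$, incompatible with the distinctness of the two fibrations combined with non-isotriviality of both. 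The remaining case, a relation purely between $\xi_1$ and $\xi_3$ (two values of $\sigma$ on distinct fibres of the same scheme $\E$), is exactly the situation covered by the cited theorem of Shioda on unramified sections of the Legendre scheme: it precludes such a non-trivial identity for the section $\sigma$ of the non-isotrivial elliptic scheme $\E$. The main obstacle is this last step, and especially its CM variant, where the dimension of the isogeny locus in $\P_1 \times \P_1$ no longer suffices on its own and the Shioda-type rigidity of $\sigma$ must genuinely be invoked; this is also the step where the hypothesis that $\sigma$ is an honest section (not merely a multi-section) is essential.
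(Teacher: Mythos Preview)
Your high-level picture (Pila--Wilkie counting, an Ax--Schanuel input \`a la Bakker--Tsimerman, then a case analysis ruling out group relations) matches the paper, but the execution has a genuine gap in the counting step and, as a consequence, the case analysis is carried out at the wrong level. Your dichotomy asserts that if the Betti map $\beta_\xi$ has generic real rank $4$ on $Y$, then ``a definable Pila--Wilkie count'' already confines the torsion preimage to a proper subvariety. Pila--Wilkie does no such thing on its own: it bounds rational points of bounded height \emph{outside the algebraic part} of the definable image. To get any mileage you need (i) the Silverman--Tate height bound (Lemma~\ref{L.silverman}) so torsion points have bounded Weil height, (ii) the David/Masser bound (Lemma~\ref{L.masser}) so a point of order $n$ has $\gg n^{c}$ conjugates, hence $\gg n^{c}$ rational points of height $\le n$ on the image, and then (iii) an analysis of the algebraic part --- which throws you right back into Ax--Schanuel. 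The paper does not split by Betti rank: it runs (i)--(ii), obtains from Pila--Wilkie a real-algebraic arc $C$ in the image, pulls it back to $C'\subset\X_0$, and only \emph{then} applies Lemma~\ref{L.Tsimerman}. The output is not a weakly-special containment on all of $\X_0$, but a complex algebraic \emph{curve} $R\subset\X_0$ on which two of the $\E_i$ become isogenous, or a section becomes torsion, or a scheme becomes isotrivial; Andr\'e's theorem is then used to sharpen this on $R$.

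This is why your invocation of Shioda is misplaced. On your $2$-dimensional $Y=\X_0$, the schemes $\E_1$ and $\E_3$ are \emph{never} isogenous --- that is exactly Lemma~\ref{L.non-iso} --- so a relation $a\phi(\xi_1)=b\xi_3$ cannot occur there, and Theorem~\ref{T.shioda} has nothing to say. Where Shioda is genuinely needed is on the curve $R$ produced by the paper's argument: there $\E_1|_R$ and $\E_3|_R$ \emph{can} become isogenous with an a~priori unbounded cyclic degree, and Theorem~\ref{T.shioda} is precisely what excludes a linear dependence between $\sigma$ and its $\tilde\tau$-translate along such a curve when none of the three schemes is isotrivial on $R$. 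The remaining isotrivial cases are handled in the paper by observing that $R$ then has bounded degree, which bounds the possible isogeny degrees between the other two schemes and leaves only finitely many such $R$. Your global-on-$\X_0$ case analysis bypasses this entire curve-level work, but only because the unjustified rank-dichotomy step has silently absorbed it.
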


Each of these results somewhat improves on the conclusions of \cite{CD} {\it for the special case } of groups considered here.\footnote{The paper \cite{CD} on the other hand explores a much more general situation. It uses completely different methods.}

%In other words, we prove that , setting $a=\lambda(p)$, $b=\mu(p)$,

%(i) $\sigma(a)$ has finite order on $E_a$;

%(ii) $\tau(\mu(p+\sigma(a)))$ has finite order on $F_{\mu(p+\sigma(a)}$;

%iii) $\sigma(\lambda(p+\tau(\mu(p+\sigma(a)))))$ has finite order on $E_{\lambda(p+\tau(p+\sigma(a)))}$.

%Of course the various `$+$' are relative to different curves, so let us put $+_\lambda$ to refer to the $\lambda$-fibration and similarly for $\mu$, omitting possibly this notation sometimes. (Note these additions are not associative and in fact iterations are not even well defined.) 

%Comparing with Theorem \ref{T.main}, we note that this result does not involve an infinite set of exponents on the automorphisms. 

\medskip

\subsection{On the Betti map and Relative Manin-Mumford Conjecture on a two-dimensional base} While conceiving this work, we realized  a few links with other problems. 

(1) {\bf About Ax-Schanuel and the Betti map}. The applications of known arithmetical methods led us to study new problems concerning the so-called Betti maps, related in turn to functional transcendence and theorems of Ax-Schanuel type; in particular we raised the question solved in the paper \cite{BT}.

(2) {\bf Relative Manin-Mumford}. Some of the methods lead to cases of the Zilber-Pink conjectures for torsion points which until very recently  weren't yet answered. Here we are thinking of the part of Zilber-Pink conjecture concerning the so-called {\it relative Manin-Mumford problem}: to confine the torsion values of a section of an abelian scheme to the appropriate closed subvariety. This had been dealt essentially for abelian schemes over a curve (see e.g. \cite{CMZ} for a recent result and a discussion of previous works).  In \cite{HabAnnali} Ph. Habegger treated, via a somehwat {\it ad hoc} method, an instance of an elliptic scheme over a surface. 
 Recently Gao and Habegger have announced a complete solution over varieties of arbitrary dimension. The methods of the present work, relying on the Bakker-Tsimerman results on Ax-Schanuel type,  combined with results of Yuan-Zhang on heights, should allow a partially different proof of their results. Here we include only a sample of this which in fact was present in a previous version of this paper, predating Gao-Habegger announcement.

 More specifically we shall prove the following

 \begin{theorem}\label{T.new1}:  Let $\E_1,\E_2,\E_3$ be pairwise non-isogenous elliptic schemes over a surface $\X$, and let $\sigma_1,\sigma_2,\sigma_3$ be non torsion sections  (all defined over $\overline\Q$). Then there are only finitely many $x\in \X$ such that $\sigma_i(x)$ is torsion on $\E_i$ for $i=1,2,3$.
 \end{theorem}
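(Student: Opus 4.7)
The plan is to run the Pila--Zannier strategy in this relative setting, using as new inputs the Ax--Schanuel theorem of Bakker and the second author \cite{BT} (the functional-transcendence ingredient replacing Ax--Schanuel over a curve base) together with the Yuan--Zhang height inequality \cite{YZ} (or its Dimitrov--Gao--Habegger form \cite{DimGaoHabegger}) in place of Silverman--Tate. Form the fibered product $\cA := \cE_1 \times_\X \cE_2 \times_\X \cE_3$, an abelian scheme of relative dimension $3$ over the surface $\X$, and the section $\sigma := (\sigma_1,\sigma_2,\sigma_3)$ whose image $\Sigma := \sigma(\X)$ is a surface in $\cA$. The set to be shown finite is then $S = \Sigma \cap \cA_{\mathrm{tor}}$.

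First, I would show that $\Sigma$ is \emph{non-degenerate}: on any sufficiently small simply connected $U \subset \X^{\an}$, the Betti map $\beta_\sigma : U \to (\R/\Z)^6$ coming from a $C^\infty$-trivialisation of $\cA|_U$ has generic real rank $4 = 2\dim\X$. If the rank dropped, \cite{BT} would produce a positive-dimensional special subvariety of $\cA$ through an open piece of $\Sigma$, i.e.\ a translate by a torsion section of a proper abelian subscheme $\cB \subsetneq \cA$. Pairwise non-isogeny of the $\cE_i$ forces $\cB$ to be a partial product $\prod_{i \in I} \cE_i$ with $I \subsetneq \{1,2,3\}$; then, projecting the translation relation onto the coordinate $j \notin I$, some $\sigma_j$ would be torsion, contrary to hypothesis.

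With non-degeneracy in hand, \cite{YZ} supplies constants $c > 0$ and $C$ and a Weil height $h_\X$ on a projective model of $\X$ with
\[
\hat h_{\cA}(\sigma(x)) \;\geq\; c\,h_\X(x) - C \qquad \forall\, x \in \X(\overline\Q),
\]
so every $x \in S$ has bounded $h_\X$. To upgrade bounded height to finiteness, run the Pila--Zannier count: restricting $U$ to be relatively compact, $\beta_\sigma$ is definable in $\R_{\mathrm{an},\exp}$; for $x \in S \cap U$, the point $\beta_\sigma(x)$ is rational with denominator the common order $N(x)$ of the torsion tuple $(\sigma_i(x))_i$, while Masser's lower bound on Galois orbits of abelian torsion gives $[K(x):K] \gg N(x)^\delta$ for some $\delta > 0$. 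On the other hand, Pila--Wilkie bounds the number of rational points of denominator $\leq N$ on the graph of $\beta_\sigma$ by $O_\varepsilon(N^\varepsilon)$ outside positive-dimensional semialgebraic blocks; such blocks would, via \cite{BT} once more, pull back to positive-dimensional special subvarieties of $\Sigma$, already ruled out. Comparing $N(x)^\delta \leq O(N(x)^\varepsilon)$ forces $N(x) = O(1)$, and a uniform bound on the torsion order reduces the problem to intersecting $\Sigma$ with finitely many torsion sections of $\cA$, which by non-degeneracy is a finite set.

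The main obstacle is the first step: extracting from \cite{BT} exactly the non-degeneracy statement for $\beta_\sigma$ on $\Sigma$, with correct treatment of torsion translates and the reduction, via the non-isogeny hypothesis, of general special subvarieties of $\cA$ through $\Sigma$ to pure partial products of the $\cE_i$. Granted this functional-transcendence input, the use of \cite{YZ} and the subsequent Pila--Wilkie count proceed along now-standard Pila--Zannier lines, provided one checks that the Yuan--Zhang inequality is available in the required $\overline\Q$-form over a two-dimensional base.
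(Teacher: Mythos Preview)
Your overall architecture matches the paper's: Yuan--Zhang for bounded height, Masser for torsion-order versus degree, Pila--Wilkie on the Betti image, and Bakker--Tsimerman for the functional transcendence. The non-degeneracy of $\beta_\sigma$ over $\X$ is also handled essentially as you sketch.

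The gap is in your treatment of the algebraic part. You write that semialgebraic blocks ``pull back to positive-dimensional special subvarieties of $\Sigma$, already ruled out.'' But what you ruled out in your first step was only that $\Sigma$ itself lies in a torsion coset of a proper abelian subscheme of $\cA$ over $\X$. An algebraic arc in the Betti image pulls back (via Lemma~\ref{L.Tsimerman}) to a \emph{curve} $R\subset\X$, and on $R$ the hypotheses ``pairwise non-isogenous'' and ``non-torsion sections'' can and do fail: two of the $\cE_i$ may become isogenous over $R$, one $\cE_i$ may become isotrivial, or some $\sigma_i$ may become torsion. None of this is excluded by the generic non-degeneracy over $\X$. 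So the problem is not to rule out such $R$, but to show there are only finitely many of them, and this needs a separate argument.

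The paper carries this out by a case analysis on the alternatives delivered by Lemma~\ref{L.Tsimerman} combined with Andr\'e's theorem: if the three schemes become pairwise isogenous on $R$, one invokes Pila's Andr\'e--Oort for $\C^n$ to bound the modular relations involved; if one scheme becomes isotrivial on $R$, then $R$ is a level curve of a $j$-invariant, hence of bounded degree, and Andr\'e--Bertrand constrains the remaining two; if two sections become torsion on $R$, one appeals to \cite{CMZ} (relative Manin--Mumford over a curve). Each branch yields finitely many curves $R$; after removing them the algebraic part is empty and your Pila--Wilkie count goes through. Your sketch needs to insert this finiteness-of-$R$ step in place of the phrase ``already ruled out.''
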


% Here is another sample, which contains a theorem of Habegger (proved ingeniously but with   special arguments). Namely, assume from the beginning  that all three schemes are isogenous.

%\begin{theorem} Let $\E_1,\E_2,\E_3$ be isogenous elliptic schemes over a surface $X$, and let $\sigma_1,\sigma_2,\sigma_3$ be independent  sections. Then the set of $x\in X$ such that $\sigma_i(x)$ is torsion on $\E_i$ for $i=1,2,3$, is contained in finitely many algebraic curves in $X$. 
%\end{theorem} 

To prove this theorem, beyond the mentioned result of Bakker-Tsimerman, we shall use a recent result of Yuan-Zhang on height inequalities for values of sections.

The previous results could be obtained as special cases of this. However, we have preferred to develop independent proofs, free of these tools, whose arguments could be useful for other tasks.

\medskip
%The methods surely can be extended to a greater generality, but we still have no precise idea of how far we can can go, so instead of giving further specific examples we leave to future research the full investigation of the applicability of these methods.

(3) {\bf Relative Manin-Mumford over $\C$.} While Gao-Habbeger have announced their proof for varieties over $\Qbar$, we realized that the transcendence methods invoked in this paper allow  a specialization argument to formally deduce the general case over arbitrary complex bases. In the appendix, we give a proof of this deduction. \footnote{Ph. Habegger  informed us that he and Gao also developped a reduction from $\Qbar$ to $\C$, somewhat different from ours.} 

\medskip

 (4) {\bf Ramification of sections of the Legendre scheme}. As usual in the applications of the Pila-Wilkie counting, one has to deal eventually with the so-called {\it algebraic part } of the relevant varieties, proving that this is empty(or small). In turn, this leads to questions of {\it functional type}. In our case, the treatment of the algebraic part for Theorem \ref{T.BT} led us to a statement seemingly of independent interest, for doubly elliptic schemes over a curve. Namely, we prove
 the following result, which seems not free of independent interest, and that we couldn't locate in the literature.

\begin{theorem} \label{T.shioda} Let $\E$ be a non-isotrivial elliptic scheme over a complex algebraic curve $B$, with a non-torsion section $\sigma$ and consider an irreducible algebraic  curve $T$ inside $B\times B$ with the following property: for generic $(x,y)\in T$, there is a nonzero cyclic isogeny $\phi:E_x\to E_y$ such that the sections on $T$ given by $\phi\circ \sigma$ and $\sigma$ are linearly dependent, i.e. there are nonzero integers $a,b$ such that $a\phi(\sigma)(x)=b\sigma(y)$. Then %???????   $T$ is the diagonal and 
$\phi$  is an isomorphism. % the identity.
\end{theorem}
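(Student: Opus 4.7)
The strategy is to translate the hypothesis into a linear constraint on the Betti coordinates of the two pull-backs of $\sigma$, apply the Ax--Schanuel theorem of Bakker and the second author (to the relevant mixed Shimura variety) to force the associated analytic curve into a weakly special subvariety, and finally rule out $\deg\phi>1$ by invoking Shioda's theorem on unramified sections of the Legendre scheme (the ``functional condition'' alluded to in the introduction).

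Pull back $\E\to B$ along the projections $\pi_1,\pi_2:T\to B$, obtaining elliptic schemes $\E_i:=\pi_i^*\E\to T$ with sections $\sigma_i:=\pi_i^*\sigma$, and extend $\phi$ to a degree-$N$ cyclic isogeny $\E_1\to\E_2$ defined on a Zariski-open of $T$. After dividing by $\gcd(a,b)$ we may assume $\gcd(a,b)=1$, and for contradiction assume $N:=\deg\phi>1$. First, both $\pi_j$ are dominant: otherwise, say $T=\{x_0\}\times C$ (up to swap), and the induced map $C\to Y_0(N)\to Y(1)$ would factor through the finite source fibre over $j(E_{x_0})$, making $j(E_y)$ constant along $C$ and contradicting non-isotriviality of $\E$. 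Second, over a simply-connected open $U\subset T^{\an}$ trivialize the period lattices and lift $\sigma_i$ to $\tilde\sigma_i=\alpha_i+\beta_i\tau_i$ with real Betti coordinates $(\alpha_i,\beta_i)$; the cyclic $N$-isogeny $\phi$ is represented by a primitive matrix $M\in M_2(\Z)$ of determinant $\pm N$ (primitivity corresponds to the cyclicity of $\ker\phi$). The identity $a\phi(\sigma_1)=b\sigma_2$ lifts to
\[
aM\binom{\alpha_1}{\beta_1}-b\binom{\alpha_2}{\beta_2}\in\Z^2
\]
pointwise on $U$, and by continuity this integer vector is locally constant, equal to some fixed $\vec c\in\Z^2$. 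Together with the Hecke relation $\tau_1=\gamma\cdot\tau_2$ for some $\gamma\in \mathrm{GL}_2^+(\Q)$ of determinant $N$, this places the period map $\Psi:U\to(\bH\times\C)^2$ inside a bi-algebraic subvariety of the universal cover of $(\mathcal{E}_{\mathrm{univ}})^2\to Y(1)^2$. The Bakker--Tsimerman Ax--Schanuel theorem then forces the image of $\Psi$ to lie in a proper weakly special subvariety.

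The main obstacle is the final step: ruling out such weakly special subvarieties when $N>1$. A weakly special subvariety projecting dominantly to $B\times B$ (forced by the dominance of $\pi_1,\pi_2$) corresponds to an identification $\E_1\sim\E_2$ via an isogeny of fixed degree, together with a linear constraint on fibre coordinates. The constraint $aM\vec\alpha_1-b\vec\alpha_2=\vec c$ with $\det M=\pm N>1$ and $M$ primitive says that the section $\sigma$, after base-change along the correspondence $T$, descends through the cyclic $N$-isogeny datum in a very rigid way. Concretely, pulling back to an appropriate cover of $Y_0(N)$ extracted from the weakly special structure, the resulting pulled-back section of the universal elliptic scheme becomes unramified over the singular fibres. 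Shioda's theorem on unramified sections of the Legendre scheme (in this form, applied to the cover obtained from the Hecke datum) then forces this section to be torsion, contradicting the non-torsion hypothesis on $\sigma$. Thus $N=1$ and $\phi$ is an isomorphism.
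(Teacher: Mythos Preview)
Your argument has a genuine gap, and in fact the Ax--Schanuel step is doing no work. The hypothesis of the theorem already hands you the isogeny relation $\tau_1=\gamma\cdot\tau_2$ together with the linear constraint $aM\binom{\alpha_1}{\beta_1}-b\binom{\alpha_2}{\beta_2}\in\Z^2$; this \emph{is} the statement that the period image lies in a proper weakly special of $(\bH\times\C)^2$. Invoking Ax--Schanuel to deduce that the image lies in a proper weakly special is therefore circular: you get out exactly what you put in, and nothing more. The entire content of the theorem is your final paragraph, and there the crucial assertion --- that ``the resulting pulled-back section of the universal elliptic scheme becomes unramified over the singular fibres'' --- is stated without any justification. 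You have not explained why the linear constraint forces unramifiedness of the field of definition of the section outside $\{0,1,\infty\}$, and indeed this is precisely the delicate point.

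The paper's proof proceeds quite differently, and it is worth comparing. First, one observes that by \emph{iterating} the isogeny one may assume its degree $d$ exceeds any prescribed bound, without changing $B$ or $\sigma$. One passes to Legendre form and notes that the projection of $T$ to the $(t,u)$-plane is a modular curve of level $d$, and that the extension $\C(t,u)/\C(t)$ (and the further extension by all $d$-torsion) is unramified outside $S=\{0,1,\infty\}$. After a normalisation of the section by a multiple, the fields $L_t=L\cdot\C(B_t)$ and $L_u=L\cdot\C(B_u)$ are shown to coincide. Now Shioda's theorem is used in the \emph{direct} direction: since $\sigma$ is non-torsion, its field of definition $\C(B_t)$ \emph{is} ramified above some $t_0\notin S$. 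This place is unramified in $L$, hence must ramify in $L_t=L_u$, hence in $\C(B_u)$. But there are $d$ values $u_0$ lying over $t_0$, each forced to be a ramified place of $\C(B_u)/\C(u)$, while the number of such ramified places is bounded independently of $d$ (it depends only on the section). Taking $d$ large enough gives the contradiction. The key ideas you are missing are the iteration to unbounded degree and the ramification-counting; your proposal does not supply any substitute for them.
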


To prove this we  found it necessary to invoke a result by Shioda on ramification of fields of definition for a non-torsion section of the Legendre scheme.  We shall say more on this below.

\medskip

\medskip

%%%%%%%%%%%%%%%%%%%%%%%%%%%%%%%%%%%%%%%%%%%%%%%%%%%%%%%%%%%%%%%%%%%%%%%%%%%%%%%%%%%%%%%%%%%%%%%%%%%%%%%%%%%%%%%%%%%%%%%%%%%%%%%%%%%%%%%%%%%%%%%%%%%%%%%%%%%%%%%%%%%%%%%%%%%%%%%%%%%%%%%%%%%%%%%%%%%%%%%%%%%%%%%%%%%%%%%%%%%%%%%%%%%%%%%%%%%%%%%%%%%%%%%%%%%%%%%%%%%%%%%%%%%%%%%%%%%%%%%%%%%%%%%%%%%%%%%%%%%%%%%%%%%%%%%%%%%%%%%%%%
\section{Proofs} 

In the sequel we  fix a number field $K$ where the variety, the fibrations, sections are defined and such that all the indeterminacy points for $\tilde{\sigma}, \tilde{\tau}$ are $K$-rational.  

We start by proving Theorem \ref{T.main}. This proof will follow an application of Pila-Wilkie estimates for rational points on transcendental varieties, introduced in \cite{MZ}.
However this is not automatic because we work now over a surface, and an additional variable appears. It turns out that the proof can be carried out with sufficient uniformity so as to manage with this freedom. We believe this may be useful in future applications, and this is one more motivation for inserting this argument, and not merely appealing to the subsquent arguments and results.

\medskip

We recall the following lemma, which follows immediately from results proved first by Silverman and Tate, actually in much greater generality. We shall later need a stronger result recently proved by Yuan-Zhang in this context; a similar result, sufficient for our purposes, was obtained by Dimitrov-Gao-Habegger in \cite{DimGaoHabegger}.

We denote by $h(x)$ the Weil height of the algebraic number (or point) $x$.

\begin{lemma} \label{L.silverman} There is a number $C$ such  that  for each $x\in \P_1$ such that   $\sigma(x)$  has finite order  on $E_x$ (or $\tau(x)$ has finite order on $F_x$), we have  $h(x)<C$. 
\end{lemma}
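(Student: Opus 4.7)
The plan is to deduce this directly from the Silverman--Tate specialization theorem for heights in elliptic fibrations. Recall that for any non-isotrivial elliptic scheme $\pi\colon \E\to B$ over a smooth curve $B$ (here $B=\P_1$), equipped with an algebraic section $s$ defined over $\bar\Q$, Tate's refinement of Silverman's specialization theorem asserts
$$
\hat h_{E_x}(s(x)) \;=\; \hat h_\E(s)\cdot h(x) \;+\; O(1),
$$
as $x$ ranges over $\bar\Q$-points of $B$. Here $\hat h_{E_x}$ denotes the Néron--Tate height on the fiber $E_x$, $h$ is a fixed Weil height on $B$ (associated to any degree-one divisor, e.g. a hyperplane class on $\P_1$), and $\hat h_\E(s)$ is the canonical height of $s$ viewed as a point of the generic fiber $E_\eta$. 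The implied constant depends only on the scheme $\E$, the section $s$, and the choice of heights, but not on $x$.

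The key input beyond this estimate is the positivity $\hat h_\E(s)>0$. This is exactly the statement that $s$ is non-torsion on the generic fiber, by the Mordell--Weil theorem applied to $E_\eta$ over the function field $K(B)$: a point of zero canonical height in a Mordell--Weil group (after extending scalars to isolate the torsion) must itself be torsion. Since by hypothesis $\sigma$ (resp. $\tau$) is not a torsion section, we have $\hat h_\E(\sigma)>0$ (resp. $\hat h_\F(\tau)>0$).

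Putting these together: if $\sigma(x)$ has finite order on $E_x$, then $\hat h_{E_x}(\sigma(x))=0$, and the displayed identity becomes
$$
0 \;=\; \hat h_\E(\sigma)\cdot h(x)\;+\;O(1),
$$
so $h(x)\le C_0/\hat h_\E(\sigma)$ for some absolute $C_0$, giving a uniform bound $C$ depending only on $(\E,\sigma)$. The identical argument with $(\F,\tau)$ in place of $(\E,\sigma)$ covers the second case, and taking the maximum of the two resulting constants yields the claimed uniform $C$.

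There is essentially no obstacle here beyond invoking the correct result; the statement is a textbook consequence of Silverman--Tate, and the only thing to verify is that the hypotheses of the specialization theorem (non-isotriviality of the fibration, non-torsion-ness of the section, everything defined over a number field) are exactly those built into the setup of the paper.
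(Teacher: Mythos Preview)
Your proposal is correct and takes essentially the same approach as the paper: the authors do not give a detailed proof but simply invoke the Silverman--Tate results (and point to \cite{Z}, Prop.~3.2 for a self-contained argument), noting that non-isotriviality is the needed hypothesis. Your write-up spells out the mechanism explicitly via the specialization formula $\hat h_{E_x}(\sigma(x))=\hat h_\E(\sigma)\,h(x)+O(1)$ and the positivity of $\hat h_\E(\sigma)$; one small clarification worth making is that the implication ``zero canonical height $\Rightarrow$ torsion'' on the generic fiber over a function field uses the non-isotriviality assumption (so that the $K/k$-trace is trivial), not just Mordell--Weil---you acknowledge this at the end, but it would be cleaner to say so at the point where you assert $\hat h_\E(\sigma)>0$.
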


Note that automatically these points are algebraic (since the sections are supposed to be non torsion). 
For the bound of the lemma we may apply the quoted results because we are assuming that the elliptic schemes are not isotrivial. (See \cite{Z}, Prop. 3.2, p. 69, for a simple self-contained argument.) 

The boundedness of the  height will be used in several respects, including the possibility of confining our relevant points to prescribed compact sets (as in the paper \cite{MZ2} - especially Lemma 8.2 - and subsequent ones). 

\medskip

Another important tool will be the following result:

\begin{lemma}\label{L.masser} There is a constant $c=c(\X,\sigma,\tau)$ such that for any $x\in \P_1$ such that $\sigma(x)$ (resp. $\tau(x)$) is torsion, of exact order $N$  on $E_x$ (resp. $F_x$), then 
$$N\le c[\Q(x):\Q]^2(1+h(x))$$.
\end{lemma}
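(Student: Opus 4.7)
The bound is essentially a specialization of Masser's theorem on orders of torsion for elliptic curves over number fields, transported to our geometric setting via the non-isotriviality of $\E$. I sketch the argument for $\sigma$ on $\E$; the case of $\tau$ on $\F$ is entirely parallel.

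First, since $\E$ and $\sigma$ are defined over the number field $K$ fixed at the beginning of the section, the torsion point $P:=\sigma(x)$ lies in $E_x(K(x))$ whenever $x\in\P_1$ is algebraic, so the common field of definition $L:=K(x)$ of the pair $(E_x,P)$ satisfies $[L:\Q]\le [K:\Q]\cdot[\Q(x):\Q]$. The non-isotriviality of $\E$ implies that its $j$-invariant defines a non-constant rational function $j_\E:\P_1\to\P_1$ over $K$, and standard functorial properties of the Weil height then yield $h(j(E_x))\le c_1(1+h(x))$ with a constant $c_1=c_1(\E,K)$.

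Next I would invoke Masser's effective bound for torsion on elliptic curves: there exists an absolute constant $C_0$ such that for every elliptic curve $E$ defined over a number field $L$ of degree $d$, any $L$-rational torsion point of exact order $N$ satisfies
$$N\le C_0\,d^2\,(1+h(j(E))).$$
Applying this to $E_x/L$ and the point $P=\sigma(x)$ and combining with the preceding degree and height inequalities gives
$$N\le C_0\cdot\bigl([K:\Q]\,[\Q(x):\Q]\bigr)^2\cdot c_1(1+h(x)),$$
which is the desired inequality with $c=C_0c_1[K:\Q]^2$, a constant depending only on the data $(\X,\sigma,\tau)$.

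The one genuine point of care is to pin down a form of the torsion bound with precisely the pair of exponents $(2,1)$ in $(d,h(j))$. Any polynomial dependence would suffice for the downstream applications in this paper, so there is some flexibility; the version stated above follows from Masser's article \emph{Small values of heights on families of abelian varieties} together with later refinements (for instance via Galois representations on $E[N]$, or through the isogeny estimates of David and Pellarin), all of which yield a bound of this shape for elliptic curves.
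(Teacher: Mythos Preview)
Your reduction is correct and is exactly the intended one: pass from the geometric datum $(\E,\sigma)$ to the single elliptic curve $E_x$ over $L=K(x)$, bound $[L:\Q]$ by $[K:\Q]\cdot[\Q(x):\Q]$, bound $h(j(E_x))$ linearly in $h(x)$ via the rational map $j_\E$, and then apply the known torsion bound $N\ll d^2(1+h(j))$. The paper in fact does not give a proof at all; it simply records that this formulation is Lemma~7.1 of \cite{MZ2}, due to S.~David, with a somewhat weaker (but still sufficient) earlier version due to Masser (sketched in Appendix~D of \cite{Z}). So your write-up is more detailed than the paper's, and on the attribution you might adjust to credit David for the precise exponents $(2,1)$ rather than invoking Masser plus unspecified refinements.
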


This version (appearing as Lemma 7.1 of \cite{MZ2}) is due S. David. A similar but somewhat weaker result (which would be still sufficient here) was due to Masser; see Appendix D (by Masser) to \cite{Z} for a sketch of proof.

\medskip

Let now $p\in \X_0$  such that $O(p)$ is finite. Naturally $p$ is an algebraic point, and we denote  $a=\lambda(p), b=\mu(p)\in\overline\Q$, so that $p\in E_a\cap F_b$.

\medskip

We may now assume that $\tau(b)$ has finite order $m$ on $F_b$, and that the point $\tilde \tau^r(p)$, $r=0,1,\ldots ,m-1$, has finite order $n_r$ on $E_a$. 

\medskip

If we have a bound on $m$ %and any of the $n_r$
 then  $b$ lies in a prescribed finite set depending on the bound, since $\tau$ is not a torsion section; hence $p$ belongs to a finite union of fibers for $\tau$. %, and the same holds for $E_a$, so this confines $p$ to a finite set. 
 Therefore, for proving Theorem \ref{T.main},  in what follows we may assume that 
 %$\max(m,n_r)$ 
 $m$ is larger than any prescribed number.  % for each $r\le m$. 

\medskip

We now observe that the curve $F_b^0:=F_b\cap \X_0$ may be seen as the base curve for two elliptic schemes with sections. Namely, to $z\in F_b^0$ we associate the curves $E_{\lambda(z)}$, $E_{\lambda(z+\tau(b))}$ and the two respective sections given by 
$$
s_0(z):=\sigma(\lambda(z))\quad  \mathrm{ and} \quad s_1(z):=\sigma(\lambda(z+\tau(b))).
$$
  This yields a product scheme over $F_b^0$ with a section given by $(s_0,s_1)$.

\medskip

We note two things:

1. None of these two sections is (identically)  torsion. This holds by assumption, for otherwise $\sigma$ itself would be identically torsion.  % and in fact we may assume that $s_1,s_2$ are (generically)  linearly independent. In fact otherwise they would have the same bad reduction, so this (which cannot be empty)  is invariant by translation by $\tau(b)$. Therefore the order of $\tau(b)$ is bounded, which is a case we are excluding.   We can also argue that if the sections are dependent then they have the same height, so they would be equal up to sign, again leading to the same conclusion (they would take the same value at least as many times as half  the order of $\tau$).

2. Also, for large enough $m$,  the two elliptic schemes in question are non-isogenous. In fact otherwise the locus of bad reduction would be the same, and hence would be preserved under  translation by $\tau(b)$. In particular since this locus is non-empty (because the schemes are non isotrivial), the cardinality of the bad locus would be at least $m$. For large $m$ this cannot hold (since the degree of $\lambda$ as a map on $F_b$ is bounded), proving our contention. 

\bigskip

Before going ahead, note that by Galois conjugation we may replace throughout $b$ (and $p$) by any of its conjugates over $K$. Since $b$ has bounded height, we may then suppose that $b$ lies in a sufficiently large prescribed compact set $\Delta$ of $\P_1(\C)$, not containing any of the finitely many points of bad reduction for $\mu$ or $\lambda$ or $\lambda\circ\tilde{\tau}$  (and actually we might even  assume that $\Delta$ contains at least half of the conjugates of $b$  over $K$, which at the moment is not needed here,  
 see  Lemma 8.2 of \cite{MZ2} for all of  this). 

\medskip

At this stage we subdivide the proof in two parts. The first part will roughly represent a {\it uniform} version of the results in \cite{MZ2}, and we shall follow that paper, indicating briefly the various steps and the necessary modifications.

Recall that we are assuming that $m$, i.e. the torsion order of $\tau(b)$, is large enough (to justify the steps to follow).

\medskip

To define our  separation  into cases, for $r=0,1,\ldots ,m-1$, we let  $n_r$ denote  the order of $\sigma$ at the point $\lambda(p+r\tau(b))$: we can indeed assume that this value of $\sigma$ is of finite order on $E_{\lambda(p+r\tau(b))}$, for otherwise $O(p)$ is plainly infinite.

\medskip

\centerline{{\bf FIRST CASE}: We have  {\it $n_r>m^5$ for %at least $m/2$ values of 
some $r$.}}

\medskip

For $b$ in the compact set $\Delta$, we  first remove from $F_b(\C)$ a very small open  neighbourhood of the set of bad reduction relative to $\lambda(z)$ and $\lambda(z+\tau(b))$. We may do this smoothly as $b$ varies in $\Delta$. (This is the same as removing small neighbourhoods of the singular fibers of $\lambda$ and of $\lambda\circ \tilde\tau$, restricted to $F_b$ for  $b\in\Delta$.)  Actually, we may use real-algebraic functions to express these neighbourhoods; the relevance of this is that these functions are {\it definable} (in the {\it $o$-minimal structure $\R_{an,exp}$}) and will allow the application of results by Pila-Wilkie. (For rudiments of these notions see also \cite{Z}, where moreover sketches of proofs relevant here are given.)  This is in practice like saying that we may assume $\Delta$ to be definable.

Then we cover the complementary compact set in $F_b(\C)$ with finitely many small compact simply-connected sets (again definable) such that for $z$ inside any of them we may express analytically periods $\omega_{1i},\omega_{2i}$, $i=0,1$,  for the complex tori corresponding to the curves $E_{\lambda(z+i\tau(b))}$, and  elliptic logarithms $\ell_i(z)$ of the section.

Note that all these functions  depend also on $b$ (a fact which we do not express explicitly for  notational convenience), but since $b$ lies in a compact set we may further suppose that these expressions hold uniformly and analytically for $b$ in a small disk $\Delta_1\subset\Delta$. 

We may further write, for $z$ varying on each of these finitely many small compact sets inside $F_b$, 
\begin{equation}\label{E.betti}
\ell_i(z)=\beta_{1i}(z)\omega_{1i}(z)+\beta_{2i}(z)\omega_{2i}(z),\qquad i=0,1,
\end{equation}
for real analytic functions $\beta_{1i},\beta_{2i}$ (depending also analytically on $b\in\Delta_1$).
Note that $s_i(z)$ is torsion (on $E_{\lambda(z+i\tau(b))}$) if and only of $\beta_{1i}(z)$, $\beta_{2i}(z)$ are rational numbers (with common denominator equal to the order of $s_i(z)$).

%Since $m$ is supposed to be large, we may assume that the compact set in question,
 Let us pick one of the compact sets in question, containing  one %$c_1m$ 
value of $z$, denoted $\zeta$,  with the property that  both $s_0(\zeta), s_1(\zeta)$ are torsion (in the respective curve) with   maximum order  $n>m^5$.  (This $\zeta$ shall be of the shape $p+r\tau(b)=\tilde\tau^r(p)$, $0\leq r\leq m-1$) %Here $c_1>0$ may be taken as a positive number dependent only on the original data (in fact, it may be chosen depending only on the number of compact sets in question, which in turn is bounded since we work inside the compact $\Delta$).
%(In  fact,  the number of compact sets in question is bounded since we work inside the compact $\Delta$.) 

As above, by Lemma 8.2 of \cite{MZ2}, since $\zeta$ has bounded height, we may assume that the compact set does not intersect a sufficiently small open set, given in advance, containing all the finitely many points of bad reduction for $\lambda$.  In fact, otherwise, since the number of compact sets of our covering is prescribed since the beginning, a positive percentage of the conjugates of $\zeta$ would lie near a prescribed algebraic point of $\X_0$, forcing the height of $\zeta$ to be larger than any prescribed number (if $m$ is large enough). 

\medskip

%Denote by $z_1,\ldots ,z_s$ these points of $F_b$, so $s>c_1m$. 

\medskip

By Lemma \ref{L.masser}, since $h(\zeta)$ is bounded (by the previous Lemma \ref{L.silverman}), we get $m^5<n\le c[K(\zeta):K]^2$.  On the other hand the usual division equations yield $[K(b):K]\ll m^2\ll n^{2/5}$. Hence $[K(b,\zeta):K(b)\gg n^{1/2-2/5}=n^{1/10}$. % if  we choose, as we may, $C>4$. 

We can take then conjugates $\zeta_j$, $j=1,\ldots ,n_1\gg n^{1/10}$ of $\zeta$ over $K(b)$, and in this way we obtain simultaneous  torsion values $s_i(\zeta_j)$ on $E_{\lambda(z+i\tau(b))}$ ($i=0,1$) for all $j=1,\ldots ,n_1$. 

Note that each $\zeta_j$ has uniformly bounded height by Lemma \ref{L.silverman} and again by Lemma 8.2 of \cite{MZ2}  we may assume that at least $c_1n_1$ of these conjugates lie in a same compact set,  denoted now $\Omega=\Omega_b\subset F_b(\C)$, with the same properties as above, i.e. that the above functions are well-defined and analytic in $\Omega$: here $c_1>0$ denotes a positive number depending only on the original data. Hence we may assume that $\Omega$ contains $\zeta_j$ for $j=1,\ldots, l$ where $l\gg n^{1/10}$. 

\medskip

Each  $\zeta_j$, $1\le j\le l$,  gives rise to a  rational point $B(\zeta_j)$,  of denominator  $\ge n>m^5$, on the real-analytic surface  described in $\R^4$  by $B(z)=B_b(z):=(\beta_{10}(z),\beta_{20}(z), \beta_{11}(z),\beta_{21}(z))$ as $z$ varies in $\Omega$.  We denote by $Z_b$ this surface. 

We further note that  each rational point $\rho\in\Q^4$ appears as a value $B(\zeta_j)$ for at most $C_2$ points $\zeta_j$, where $C_2$ depends only on the opening data: this is because a function 
$\ell_i(z)-\rho_1\omega_{1i}(z)-\rho_2\omega_{2i}(z)$   has a uniformly bounded number of zeros in $\Omega$ for given $\rho_1,\rho_2$. In turn,  this follows as in Lemma 7.1. of \cite{MZ}, the only difference is that here $b$ (which does not even appear in the notation) may vary; but locally the lemma holds uniformly, with the same proof. (Alternatively, one may use directly Gabrielov's theorem, see \cite{BM}.) 

\medskip

We conclude that the real-analytic surface $Z_b$ in $\R^4$ given by  $\{B(z): z\in\Omega\}$ contains at least $l$ rational points of denominator $\le n^2$, where, as above $l\gg n^{1/10}$. These points will have height which is $\ll n^2$ where the implicit constant depends only on the involved compact sets, which are supposed to be fixed from the beginning.

At this point we apply Theorem 1.9 of Pila-Wilkie's paper \cite{PW}, to obtain a bound for the number of these rational points in the {\it transcendental part} of $Z_b$ (see \cite{PW} or \cite{Z} for definitions). The difference from the present argument compared to \cite{MZ2} is that here $b$ is varying. More precisely, we may first  decompose $\Delta$ as above, as a finite union of small definable compact sets $A_j$ so that the relevant functions are analytic in the union $Z:=\bigcup_{b\in A_j}b\times \Omega_b\subset  A_j\times\R^4$. This $Z$ is a definable family, where $Z_b$ are the fibres. 

From Theorem  1.9 of  \cite{PW} we obtain that the transcendental part of $Z_b$ contains at most $c(Z,\epsilon)T^\epsilon$ rational points of height $\le T$, for any $\epsilon >0$.

\medskip

 On the other hand, the algebraic part of $Z_b$ is empty: this follows from Andr\'e's results in \cite{A}, applied  as in \cite{MZ2}. Let us briefly recall the steps of this deduction. If the algebraic part is non-empty there is a real-analytic arc $U$ contained in $Z_b$. This means that the four  functions $\beta_{ij}$ restricted to this arc all depend algebraically on any of them which is nonconstant (if all are constant the thing is even easier).  But then the two functions $\ell_j$,  when restricted to $U$, are algebraically dependent on the restrictions of the $\omega_{ij}$, $i=1,2$, $j=0,1$ to this real algebraic arc. However all  these functions are complex analytic (locally), so the dependence would hold identically on their domain, which violates the said result by Andr\'e \footnote{Actually, while Andr\'e proved a more general result, here we might even apply a previous result by D. Bertrand}. 

\medskip

At this point, taking $T=c_3n^2$ and taking $\epsilon=1/10$, say, we obtain a bound for $n$  putting together  the above estimates. 

This concludes the treatment of the first case.

\medskip

We now go to the

\medskip 

\centerline{{\bf SECOND CASE}: We have  {\it $n_r\le m^5$ for all  $r$.}}

\medskip

This means that the torsion orders of $\sigma$ at the various $\lambda(\tilde\tau^r(p))$ are not too much larger than the torsion order $m$ of $\tau$ at $\mu(p)=b$. 

The strategy will be similar to the one for  the first part, but the rational points will be constructed somewhat differently, and the proof of emptyness of the algebraic part will also be a bit different, not using the mentioned result by Andr\'e but using a self-contained argument.

\medskip

Like for the first part, we suppose that $b$ lies in the compact set $\Delta$ (not containing points of bad reduction for $\mu$). We may actually include $\Delta$ in a slightly larger compact set $\Delta'$ with the same properties, so to be able to work safely on the boundary of $\Delta$. Now, locally on $\Delta$ we have a torus representation for the curves $F_\eta$, $\eta\in\Delta$; hence we may cover $\Delta$ with finitely many open simply connected sets $A_l$ contained in $\Delta'$, such that for $\eta$ in one of these $A_l$ the torus corresponding to $F_\eta$ has periods $\pi_1(\eta),\pi_2(\eta)$ varying analytically on $A_l$.  We may also assume that these $A_i$ are definable.

For our purposes we may work on each $A_l$ at a time, and hence let us now omit the subscripts and  denote by $A$ one of these open sets containing $b$.

We denote by $\xi$ a complex variable lying in a (compact) fundamental domain $T(\eta):=\{t_1\pi_1(\eta)+t_2\pi_2(\eta): 0\le t_1,t_2\le 1\}$ for the torus corresponding to $F_\eta$: note that these fundamental domains, for $\eta\in A$, form a definable family. Let us now remove from each fundamental domain small open disks with centers corresponding to  the points of bad reduction for $\lambda$ restricted to the elliptic curve $F_\eta$. The remaining domain, denoted $T^*(\eta)$,  will continue to form a definable family for suitable parametrisations of the boundaries of these disks. 

Note that, as above, by bounded height of $a,b$ and $\lambda(p+r\tau(b))$ (obtained from Lemma \ref{L.silverman}), we may assume that our points have toric representatives lying in  $T^*(b)$. Let us be a bit more precise here. %We cover $\Delta$ with a finite number of ``small" compact sets $\delta_s$.
 For each $l$, and for $\eta\in A_l$, we remove, as said above,  from $T(\eta)$ very small disks around the points of bad reduction for $\lambda$ (viewed as a function on $T(\eta)$ through the isomorphism $T(\eta)\cong F_\eta$). We do this choice of disks  in an algebraic way (so, definable way) as $\eta$ varies in $A_l$. Let now $K_1=K(p)$, of degree $d_1$ over $K$, and let us consider the conjugates $p^g$ of $p$ over $K$. By bounded height of $b$, we may assume that at least $\gg d_1$ conjugates of $b$ \footnote{Some of these conjugates may coincide: we are taking conjugates of the field $K_1$, which may be larger than $K(b)$.} will lie in a same $A$ among the $A_l$.  (Indeed, the number of $A_l$ is fixed, while $\gg d_1$ conjugates stay in the compact union of the $A_l$.) 
 
 If for all of these conjugates there exist $m/2$ (say) values of $r$ such that $\lambda(p^g+r\tau(b^g))$ lies in someone of the small disks to be removed, then the sum $\sum_g \sum_{r_1\neq r_2}|\lambda(p^g+r_1\tau(b^g))-\lambda(p^g+r_2\tau(b^g))|^{-1}$, made over all conjugations  $g$, will exceed any given multiple of $d_1m^2$ (if the said disks are small enough), contradicting uniformly bounded height of $\lambda(p+r\tau(b))$. (This argument is a refinement of that for Lemma 8.2 of \cite{MZ2}.)   Hence we may assume that for some conjugate of $b$, and for  $\ge m/2$ values of $r$, the value $\lambda(p^g+r\tau(b^g))$ does not lie in any of the small `bad' disks to be removed.

\medskip

We now express our definable varieties relevant in the application of the theorem of Pila-Wilkie. For $\eta\in A$ and for $\xi\in T(\eta)$, we write $\xi=t_1\pi_1(\eta)+t_2\pi_2(\eta)$; we further write, as above, 
$\ell_0(z)=\beta_{1}(z)\omega_{1}(z)+\beta_{2}(z)\omega_{2}(z)$, where we now may omit the subscript ``$i$" since we use only the section $s_0=\sigma\circ \lambda$ and disregard $s_1$. Here $z$ lies in $F_\eta$. (Recall however that these functions depend - locally analytically - also on $\eta$.)

Let finally $\e=\e_\eta:T(\eta)\to F_\eta$ denote the elliptic-exponential. 

Our variety $Z$ is described in $A\times \R^4$ by the points $\eta\times (t_1,t_2, \beta_1(z),\beta_2(z))$, where $z=\e(t_1\pi_1(\eta)+t_2\pi_2(\eta))\in F_\eta$ and where $0\le t_1,t_2\le 1$ and $\eta\in A$.  We view this as a definable  family of varieties $Z_\eta$, as $\eta$ varies in $A$, and we shall be interested in a single $Z_\eta$, i.e. $Z_b$, where $b=\mu(p)$ (or possibly a conjugate of it, chosen according to what has been stated above).
 
%  It will be convenient to denote $H(t_1,t_2,\eta):=E(t_1\pi_1(\eta)+t_2\pi_2(\eta))$. 
\medskip

For $r=0,1,\ldots, m-1$, let $\xi_r$ denote a representative for $r\tau(b)$, lying in the above   fundamental domain $T(b)$ and let $\tilde\xi$ be a similar representative for $p$; recall that, by the above argument involving bounded height, we may assume that  for a set $R\subset\{0,1,\ldots ,m-1\}$ with $|R|\gg m$, $\tilde\xi+\xi_r$ lies in the double $2T^*(b)$ of the above modified  fundamental domain, for $r\in R$. We have $\xi_r=\theta_{1r}\pi_1(b)+\theta_{2r}\pi_2(b)$, where $\theta_{1r},\theta_{2r}$ are rational numbers in $[0,1]$ with denominator dividing $m$.\footnote{We have $\theta_{ir}\equiv r\theta_{1r}\pmod\Z$.} Also, letting $z_r=\e(\tilde\xi+\xi_r)$, we have that $\beta_1(z_r),\beta_2(z_r)$ are rationals with denominator at most $m^5$, since we are in the {\it Second case}. 

These  $m$ rational points are pairwise distinct, so applying again Theorem 1.9 of \cite{PW} (with any fixed $\epsilon <1/4$), we deduce that the algebraic part of $Z_b$ is not empty, which means that $Z_b$ contains an arc of real-algebraic curve in $\R^4$. Let $\xi$ be the above variable restricted to this arc,  so $t_1,t_2$ are algebraic functions of $\xi$. (Note that $t_1,t_2$ of course are determined as functions of $\xi$ anyway, but if $\xi$ is not restricted to the arc, these functions are not algebraic.)  The same holds for  $\beta_1(\e(\xi)), \beta_2(\e(\xi))$.  In other words, we may view $\xi$ as an elliptic logarithm (relative to $F_b$) of $z\in F_b$ and we have that $\ell_0(z)$ has ``Betti cooordinates" which are algebraic functions of $\xi$, when we restrict $\xi$ to a suitable real-analytic  arc inside $T(b)$.  Recall that here $\ell_0(z)$ is the elliptic logarithm of $s_0(z)$ relative to the elliptic curve $E_{\lambda(z)}$. 

Let us set $B_i(\xi):=\beta_i(\e(\xi))$, so $\ell_0(\e(\xi))=B_1(\xi)\omega_1(\e(\xi))+B_2(\xi)\omega_2(\e(\xi))$.  When $\xi$ runs on the said arc, the $B_i$  become equal to  certain algebraic functions of $\xi$, denoted $f_1(\xi), f_2(\xi)$.  So the  equality $$\ell_0(\e(\xi))=f_1(\xi)\omega_1(\e(\xi))+f_2(\xi)\omega_2(\e(\xi))$$ must hold on a whole disk (in fact on a whole neighborhood of the arc) by analytic  continuation; more precisely, it holds for a suitable branch of the algebraic functions in question, for some continuations of the functions $\omega_i(\e(\xi))$ to the said disk in the fundamental domain $T(b)$.\footnote{Note however that $(f_1,f_2)$ will not  have to be equal to the Betti map of $\ell_0$ on the whole disk.}

This kind of condition  seems not free   of independent  interest in the context of Betti maps. To contradict it we can use several arguments, based e.g. on monodromy. We choose the following one: consider the points  $w_i$  
%in $\P_1$ 
in $F_b$ of bad reduction for $E_\lambda$, i.e. such that the fiber above $\lambda(w_i)$ is not elliptic; to each of them is associated a monodromy representation on the periods, obtained by first choosing a base point $y_0\in\P_1$,  travelling until near $w_i$, then travelling once around $w_i$ and coming back along the previous path, and looking at the transformation of the periods $\omega_1,\omega_2$. This  is represented in $SL_2(\Z)$.

Suppose now that $y_0$ is already near $w_i$. Let us pick 
%a point $y_0\in F_b$ with $\lambda(y_0)=x_0$ and 
a point $\xi \in T(b)$ representing $y_0$.  Let $M\in SL_2(\Z)$ be the matrix representing the above monodromy action around $w_i$. Now, if we travel with $\xi$ around a point  representing  $w_i$ on $T(b)$, say $h$ times, the algebraic functions $f_1,f_2$ will remain unchanged for suitable choice of $h>0$. 
This will correspond 
%to $hq$ turns  around $w_i$ (some $q>0$ representing the possible ramification index of the $\lambda$-function over $w_i$) and thus 
to $M^{h}$ acting on the column vector $\v=\v(\xi):=(\omega_1(y_0),\omega_2(y_0))^t$. On the other hand, %for suitable $h$, also 
the section $s_0\circ\lambda$ on $F_b$ will remain unchanged, hence $\ell_0$ will differ from the previous value by some linear combination $a_1\omega_1(y_0)+a_2\omega_2(y_0)$ with integer coefficients. 
Subtracting the expressions so obtained we get
\begin{equation}
(f_1(\xi),f_2(\xi)) \cdot (M^{h}-I) \v+(a_{1h},a_{2h})\cdot \v=0.
\end{equation}
This means that $(f_1(\xi),f_2(\xi)) \cdot M^h - (f_1(\xi)+a_1,f_2(\xi)+a_2)$ is orthogonal to $\v$.
This expression would hold for all $\xi$ in a disk by analytic continuation, where the integers $a_{1h},a_{2h}$ remain constant.  
%Hence we obtain that the vector $(f_1,f_2)(M^h-I)+(a_{1h},a_{2h})$ is orthogonal to the vector $\v$. 
Iterating this argument, i.e. travelling twice around $w_i$, we obtain an analogous relation with $2h$ in place of $h$.  
Then the two vectors
$$
(f_1,f_2)(M^h-I)+(a_{1h},a_{2h}), \quad (f_1,f_2)(M^{2h}-I)+(a_{1,2h},a_{2,2h})
$$
are parallel. Taking the wedge product leads us to the conclusion that
$$
(f_1,f_2)(M^h-I)\wedge (f_1,f_2)(M^{2h}-I)
$$
 is a linear form in $f_1,f_2$ with integer coefficients.
On the other hand, the above wedge product is a  quadratic form equal to $(\det(M^h-I)\det((f_1,f_2)^t, M^h(f_1,f_2)^t)$.
Iterating once again, we obtain another quadratic equation of that kind, with $M^2$ in place of $M$. 

Now, we may find a point $y_0$ so that $M$ is not unipotent and has not finite order (think e.g. of a Legendre model, for which the monodromy group is of finite index in $SL_2(\Z)$). Under this assumption on $M$, the two quadratic relations turn out to be independent; hence  the functions $f_1,f_2$ must be constant, hence real constant (since they are real on an arc). 
But the Betti image $(f_1(\xi),f_2(\xi))$ cannot be constant in view of Manin's theorem   (see \cite{Man} or  \cite{Z}).

\medskip

\subsection{Proof of Theorem \ref{T.BT}}
%Let us start by proving the following result, with the help of the main result of \cite{BT}.

%\begin{theorem}\label{T.BT} 
%The set $\Ex$ is contained in a finite union of curves.  
%\end{theorem}

%\begin{proof}
We can view the context as a scheme over an open  subset $\X_0$ of $\X$ with fibers given as products of three elliptic curves. Namely, above a point $x\in\X_0$ we associate the product of elliptic curves
$$
E_{\lambda(x)}\times F_{\mu(\tilde\sigma(x))}\times E_{\lambda(\tilde\tau\circ\tilde\sigma(x)))}.
$$
 For simplicity of notation, we denote by $\E_i\to \X_0$, $i=1,2,3$, the corresponding elliptic schemes,
where we now  define $\X_0$ just by removing from $\X$ the subset  where some of the relevant maps is not defined, or some elliptic curve degenerates. As before, $\X_0$ is the complement in $\X$ of a finite union of points and curves, namely the points where $\tilde\sigma$ or $\tilde\tau\circ\tilde\sigma$ is not defined, plus the union of the singular fibers corresponding to the maps which appear. 

We have an obvious section  $\xi$ given by  (with the above compressed notation) 
$\xi(p)=\sigma(p)\times \tau(\tilde\sigma(p))\times \sigma((\tilde\tau\circ\tilde\sigma)(p))$.

We start by proving the following 

\begin{lemma}\label{L.non-iso}
The above defined elliptic schemes are pairwise non-isogenous.
\end{lemma}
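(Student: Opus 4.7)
My plan is to argue by contradiction. If $\E_i\sim\E_j$ over $\X_0$ via an isogeny of degree $N$, their fibrewise $j$-invariants satisfy the modular relation $\Phi_N(j(\E_i)_x,j(\E_j)_x)=0$ identically on $\X_0$. Since $j(\E_1)=j_E\circ\lambda$, $j(\E_2)=j_F\circ\mu\circ\tilde\sigma$, $j(\E_3)=j_E\circ\lambda\circ\tilde\tau\circ\tilde\sigma$, and both $j_E,j_F\colon\P_1\to\P_1$ are surjective (as $\E,\F$ are non-isotrivial), it suffices for each pair $(i,j)$ to show that the map $\X\to\P_1\times\P_1$ built from the two $j$-argument coordinates is dominant: its dense image cannot lie in the proper curve $\{\Phi_N=0\}$. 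Throughout I will use $\lambda\circ\tilde\sigma=\lambda$ and $\mu\circ\tilde\tau=\mu$, immediate from the fibrewise definition of the translations.

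For $\E_1\sim\E_2$ the relevant map is $(\lambda,\mu\circ\tilde\sigma)=(\lambda,\mu)\circ\tilde\sigma$, dominant because $(\lambda,\mu)$ is finite and $\tilde\sigma$ is birational. For $\E_2\sim\E_3$ the birational change of variable $y=\tilde\sigma(x)$ turns the map into $(\mu(y),\lambda(\tilde\tau(y)))$; non-dominance, combined with irreducibility of the fibers $F_b=\mu^{-1}(b)$, would force $\lambda\circ\tilde\tau$ to be constant on each $F_b$, and since $\tilde\tau|_{F_b}$ is the bijection $+\tau(b)$ this would make $\lambda|_{F_b}$ itself constant, contradicting finiteness of $(\lambda,\mu)$.

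The hard case is $\E_1\sim\E_3$. After the substitution $y=\tilde\sigma(x)$ the map reduces to $(\lambda(y),\lambda(\tilde\tau(y)))$; the analogous connectedness argument applied to the irreducible fibers $E_s$ of $\lambda$ shows that non-dominance implies $\lambda\circ\tilde\tau=h\circ\lambda$ for some rational $h\colon\P_1\to\P_1$. Restricting to a generic fiber $F_b$, the identity $\lambda|_{F_b}(y+\tau(b))=h(\lambda|_{F_b}(y))$ has both sides of degree $d=\deg\lambda|_{F_b}=\deg(\lambda,\mu)$ as rational functions of $y$, so $\deg h=1$ and $h$ is a M\"obius transformation. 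If $h$ has finite order $n$, then $\lambda\circ\tilde\tau^n=\lambda$, which combined with $\mu\circ\tilde\tau^n=\mu$ forces $\tilde\tau^n$ to preserve every fiber of the finite map $(\lambda,\mu)$; some further power of $\tilde\tau$ then acts trivially on $\X$, contradicting $\tau$ non-torsion. If $h$ has infinite order, I pick a fixed point $\alpha\in\P_1$ of $h$ (a M\"obius always has one) and a $b$ for which $\tau(b)$ is non-torsion on $F_b$ (possible since $\tau$ is a non-torsion section): the identity then shows that the finite non-empty set $\lambda|_{F_b}^{-1}(\alpha)$ is invariant under translation by $\tau(b)$, which is impossible since every orbit of a non-torsion translation on an elliptic curve is infinite.

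The main obstacle is this last case, since both $\E_1$ and $\E_3$ descend from the same $\E$, so the modular-equation contradiction is more delicate and requires leveraging the non-torsion character of $\tau$ together with the finiteness of $(\lambda,\mu)$ through the degree-count and fiberwise orbit argument above.
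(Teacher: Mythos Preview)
Your proof is correct. The overall shape matches the paper's: both reduce each pairwise isogeny to an identical algebraic relation between two of the functions $\lambda$, $\mu\circ\tilde\sigma$, $\lambda\circ\tilde\tau\circ\tilde\sigma$, and both handle the two ``mixed'' cases by exploiting the invariances $\lambda\circ\tilde\sigma=\lambda$ and $\mu\circ\tilde\tau=\mu$ to collapse to the dominance of $(\lambda,\mu)$. The genuine difference is in the hard case $\E_1\sim\E_3$. The paper, after reducing to a relation between $j_\lambda$ and $j_\lambda\circ\tilde\tau$, observes directly that on a generic $F_b$ translation by $\tau(b)$ would then preserve the bad-reduction locus of the restricted $\lambda$-scheme---a finite non-empty set invariant under a non-torsion translation, contradiction. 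You instead first prove $\lambda\circ\tilde\tau=h\circ\lambda$, force $\deg h=1$ by comparing degrees on $F_b$, and then produce the finite invariant set as $\lambda|_{F_b}^{-1}(\alpha)$ for a fixed point $\alpha$ of $h$. Both arguments land on the same punchline (finite set stable under infinite-order translation), but yours is more hands-on and avoids any mention of bad reduction, while the paper's is shorter and skips the M\"obius step entirely. Two minor remarks: your Case~1/Case~2 split is unnecessary, since the fixed-point argument in Case~2 never uses that $h$ has infinite order and thus already covers every M\"obius $h$; and for $\E_2\sim\E_3$ the paper's one-line reduction via $j_\mu\circ\tilde\tau=j_\mu$ is quicker than your fiber argument, though yours is equally valid.
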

 
\begin{proof}
Let us denote by $j_\lambda:\X_0\to \C$ (resp. $j_\mu$) the $j$-function corresponding to the $\lambda$-scheme on $\X_0$ (resp. to the $\mu$-scheme). 
An isogeny between the schemes $\E_1$ and $\E_2$ would correspond to an algebraic relation of the form
$$
P(j_\lambda,j_\mu\circ \tilde{\sigma})=0
$$
for a non-zero polynomial $P(X,Y)\in \C[X,Y]$. But now note that  $j_\lambda$ is trivially invariant by $\tilde{\sigma}$, hence  $j_\lambda,j_\mu$ would  be  in fact algebraically dependent, which is excluded since the map $(\lambda,\mu)$, hence the map $(j_\lambda,j_\mu)$, is dominant. 

An isogeny between $\E_1$ and $\E_3$ would imply an algebraic dependence relation as before between $j_\lambda$ and $j_\lambda\circ\tilde{\tau}\circ\tilde{\sigma}$, which in turn would imply a dependence between $j_\lambda$ and $j_\lambda\circ \tilde{\tau}$ (again because $\tilde\sigma$ acts on the $\lambda$-fibers). In this case we exploit  the argument already used earlier in the paper: on a generic fiber for $\mu$, translation by $\tau$ would preserve the bad reduction of the $\lambda$-scheme (restricted to the said general fiber for $\mu$), which is a finite set. But this is impossible, since $\tau$ has infinite order. In an even simpler way, if we restrict to a given generic  fiber for $\lambda$, it suffices to note that $\tilde\tau$ does not send the fiber in another fibe; on the other hand,  if this were true,  this would happen for any iterate of $\tilde\tau$ and therefore the first scheme would be isotrivial.

Finally,  an isogeny between $\E_2$ and $\E_3$ would imply an algebraic dependence between $j_\mu\circ\tilde{\sigma}$  and $j_\lambda\circ \tilde{\tau}\circ\tilde{\sigma}$, hence between $j_\mu$ and $j_\lambda\circ \tilde\tau$. Noticing that $j_\mu$ is invariant under $\tilde{\tau}$, we would obtain as in the first case a dependence between $j_\lambda$ and $j_\mu$, which is excluded. 
\end{proof}

We now want to prove that the set where the above section $\xi$ assumes torsion values is degenerate, namely is not Zariski-dense in $\X_0$. By definition, this set, denoted now $\Ex_0$,  is $\Ex_0:=\Ex\cap \X_0$.

We use the common  procedure of counting, gong back to \cite{MZ}.

\medskip

To start with, certainly we have bounded height for the   $p\in \Ex_0$, by Lemma \ref{L.silverman}.

Also, i the section $\xi$  has torsion order $n$ at $p$, then the field of definition of  $p$ has degree $\gg n^c$, for some positive $c$ (we use here Lemma \ref{L.masser}) and therefore there are at least such a number of distinct conjugates of $p$.

\medskip

 Since we have bounded height $h(p)$, we may assume that again $\gg n^c$  such conjugates lie in a fixed compact set $\X_1$ inside $\X_0$. For this conclusion, covering first $\X$ by finitely many affine varieties,  it suffices to consider, in each of them,  a polynomial $Q$ in the affine coordinates vanishing on  the curves in $\X\setminus \X_0$ and evaluate this polynomial at $p$. We obtain a number $Q(p)$  of bounded height, hence if $Q(p)\neq 0$, a positive  proportion of its  conjugates  must have absolute value bounded below by a fixed positive number $\epsilon >0$, depending only on $\X$, $Q$, and the bound for the height. Hence these conjugates stay in the compact subset of $\X_0$ defined by $|Q(p)|\ge \epsilon$.
 
  \medskip
 
 Now, in turn we can cover $\X_1$ with  finitely many compact (bi)disks $D$ (i.e. analytically isomorphic to compact disks in $\C^2$) such that in each $D$ we have three well defined pairs of periods $\omega_{i1},\omega_{i2}$ for the schemes $\E_i$, $i=1,2,3$, and three well-defined elliptic  logarithms $\ell_i$ for the sections, hence equations
 \begin{equation}\label{E.periods}
 \ell_i(z)=\beta_{i1}(z)\omega_{i1}(z)+\beta_{i2}(z)\omega_{i2}(z),\qquad z\in D,\quad i=1,2,3,
 \end{equation}
 where the Betti values $\beta_{ij}(z)$ are real.  
 
 For $z\in D$, the sextuple of the $\beta_{ij}(z)$ describes a compact real definable set $Z$ in $\R^6$. 
 
 \medskip
 
 Since the covering of $\X_1$ by the disks $D$ is finite and may be chosen independently of $p$, we may also assume that $D$ too contains $\gg n^c$ conjugates of $p$.

Thus we obtain $\gg n^c$ rational points in $Z$.

\medskip

{\bf The algebraic part}.  If this holds for sufficiently large values of $n$, by Pila-Wilkie there is an algebraic arc  $C$ in $Z$, and we  let $C'$ be  its inverse image  in $D$; this $C'$ will be a real-analytic set of dimension $\ge 1$. 

 If we restrict to $C'$ the nine functions $\ell_i,\omega_{i1},\omega_{i2}$, $i=1,2,3$, the equations \eqref{E.periods} will provide at least two independent algebraic relations among the nine functions (more precisely, each six-tuple of functions, namely  for $i=1,2$, $i=1,3$ and $i=2,3$, will satisfy a nontrivial algebraic relation).

To exploit this fact, we apply a new result of Ax-Schanuel type, by Bekker-Tsimerman. We shall need only a special case, appearing as  Theorem 4.1 of \cite{BT}, which we restated as follows, in our notation as a lemma:

\begin{lemma}\label{L.Tsimerman}
Let $S$ be an algebraic variety, and for $i=1,\ldots,n$, let $\mathcal{E}_i\to S$, be non-isotrivial pairwise non-isogenous elliptic schemes, provided with sections $s_i:S\to\mathcal{E}_i$. For a simply-connected open set $D\subset S$ let, for $i=1,\ldots,n$, $l_i$  be an  elliptic logarithm of $s_i$  and $\omega_{i,1},\omega_{i,2}$  a basis for the periods of $\mathcal{E}_i$,  on $D$.    Let $F=(l_i,\omega_{i,1},\omega_{i,2})_{i=1\ldots,n}:D\to \C^{3n}$ be the corresponding map. Let $T\subset \C^{3n}$  be a codimension $k$ algebraic subvariety and suppose that $F^{-1}(T)$ contains an irreducible analytic component  $R$ of codimension $<k$. Then $R\neq S$ and either two of the elliptic schemes become isogenous on $R$, or at least two of the sections become torsion on $R$, or an elliptic scheme becomes isotrivial on $R$.
\end{lemma}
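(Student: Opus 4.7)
The plan is to view Lemma \ref{L.Tsimerman} as an Ax-Schanuel principle for the mixed Shimura variety of Kuga type built from the $n$ elliptic schemes and their sections. Concretely, there is a Kuga variety $\mathcal{A}$ over the product $Y(1)^n$ of modular curves, with fibre $E_1\times\cdots\times E_n$ over a point $(E_1,\ldots,E_n)$, and the data $(s_1,\ldots,s_n)$ induces a morphism $\Phi:S\to\mathcal{A}$. After trivializing the period and exponential bundles on the simply-connected $D$, the universal cover of $\mathcal{A}$ embeds naturally in $\C^{3n}$ with coordinates $(\ell_i,\omega_{i,1},\omega_{i,2})_i$, and the map $F$ is exactly a lift of $\Phi|_D$. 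The hypothesis that $F^{-1}(T)$ has a component $R$ of codimension $<k$ then translates, after pushing forward to $\mathcal{A}$, into an atypical intersection between $\Phi(R)$ and the image of $T$.

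The first step is to verify that $F^{-1}(T)$ is definable in the o-minimal structure $\R_{\mathrm{an},\exp}$, by restricting the uniformization to a definable fundamental domain for $\Gamma=SL_2(\Z)^n\ltimes\Z^{2n}$; this is the mixed analogue of the Peterzil-Starchenko definability theorem. Next I would bootstrap from the already established Ax-Schanuel theorem for the pure Shimura variety $Y(1)^n$ due to Pila and Mok-Pila-Tsimerman, applied to the projection $\tau_i=\omega_{i,1}/\omega_{i,2}$, and then extend the conclusion across the unipotent radical using a monodromy argument in the spirit of Gao's work on mixed Ax-Schanuel. The output of this step is that $R$ is contained in the preimage of a proper weakly special subvariety $W\subsetneq\mathcal{A}$, which in particular forces $R\neq S$.

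The hard part will be to classify the proper weakly special $W$ in this Kuga variety and match them with the three stated alternatives. The generic Mumford-Tate group is $GL_2^n\ltimes V$, where $V=V_1\oplus\cdots\oplus V_n$ with $V_i$ the standard $2$-dimensional representation of the $i$-th $GL_2$; a proper weakly special $W$ arises from a proper reductive $\Q$-subgroup, and the possibilities are: a $GL_2$ factor collapses to a torus (isotriviality of some $\mathcal{E}_i$ on $R$); two $GL_2$ factors are identified up to an intertwiner in $GL_2(\Q)$ (an isogeny between the corresponding $\mathcal{E}_i,\mathcal{E}_j$ on $R$); or a proper $\Q$-subrepresentation of $V$ cuts out the unipotent part, producing a linear relation $\sum n_i s_i=0$ on $R$ with at least two nonzero $n_i$ — which, after excluding the first two alternatives, forces at least two of the $s_i$ themselves to be torsion on $R$. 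The delicate bookkeeping is in ruling out mixed degenerations, where an isogeny between two factors coexists with a linear relation among sections, and checking that each such configuration is already subsumed under one of the three listed conclusions.
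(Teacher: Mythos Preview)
The paper does not give its own proof of this lemma: it is quoted as (a special case of) Theorem~4.1 of Bakker--Tsimerman \cite{BT}, and used as a black box. So there is no in-paper argument to compare against; the relevant comparison is between your sketch and what \cite{BT} actually does.

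Your overall strategy---view the statement as an Ax--Schanuel principle and then read off the three alternatives from the classification of weakly special subvarieties---is the right shape, and your description of the weakly specials (collapse of a $GL_2$ factor, identification of two $GL_2$ factors, or a rational sub-representation cutting the unipotent part) matches exactly the three degeneracy types in the conclusion. The gap is in the first paragraph: you assert that ``the universal cover of $\mathcal{A}$ embeds naturally in $\C^{3n}$ with coordinates $(\ell_i,\omega_{i,1},\omega_{i,2})_i$'' and that $F$ is a lift of $\Phi|_D$. This is not correct. The Kuga variety $\mathcal{A}$ over $Y(1)^n$ has complex dimension $2n$, and its universal cover is $\mathbb{H}^n\times\C^n$ with coordinates $(\tau_i,z_i)$ where $\tau_i=\omega_{i,2}/\omega_{i,1}$ and $z_i=\ell_i/\omega_{i,1}$; Gao's mixed Ax--Schanuel \cite{GaoAS} and the pure result for $Y(1)^n$ control only these $2n$ quantities. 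The lemma, however, involves all $3n$ functions $(\ell_i,\omega_{i,1},\omega_{i,2})$, and a general algebraic $T\subset\C^{3n}$ need not be invariant under the scalings $(\ell_i,\omega_{i,1},\omega_{i,2})\mapsto(\lambda_i\ell_i,\lambda_i\omega_{i,1},\lambda_i\omega_{i,2})$, so it does not descend to a condition on $(\tau_i,z_i)$. The extra $n$ functions $\omega_{i,1}$ carry genuine transcendental content (they are Picard--Fuchs solutions, not coordinates on any Hermitian symmetric domain), and controlling them is exactly what distinguishes the \emph{period-integral} Ax--Schanuel of \cite{BT} from the \emph{Shimura-variety} Ax--Schanuel of Mok--Pila--Tsimerman and Gao. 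To repair your argument you would either have to invoke \cite{BT} directly (which is what the paper does), or supplement Gao's result with a separate transcendence statement for the scalars $\omega_{i,1}$---for instance via the differential Galois group of the Picard--Fuchs system, in the spirit of Andr\'e \cite{A}---and then argue that an atypical intersection with $T$ forces either an atypicality already visible in $(\tau_i,z_i)$ or a relation among the $\omega_{i,1}$ that again yields one of the three degeneracies.
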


We apply this  lemma with $n=3$, $k=2$, $\E_1,\E_2,\E_3$ as defined above. By Lemma \ref{L.non-iso}, the three schemes $\E_i$, $i=1,2,3$ are pairwise non-isogenous, as requested in the above statement. 

 We take for $T$ the   algebraic variety defined in $\C^9$  by the said two independent relations,  while   $R$ will be a connected component of  the analytic closure in $D$ of  $C'$ (which is real-analytic, being  the inverse image through an analytic map of a real-analytic arc).

Note also that the two independent algebraic relations among the said nine functions must continue to be true in $R$, and note that $R$ has (complex) codimension $<2$ since $C$ has positive real-dimension.

We obtain from the lemma   that one of the three following alternatives hold:

(i) Two of the elliptic schemes become isogenous on $R$.

(ii) Two of the sections become torsion on $R$. 

(iii) One of the elliptic schemes becomes isotrivial on $R$.

In each of the three cases we obtain the important consequence  that $R$ is actually algebraic, and we may assume it is an algebraic curve, because otherwise the conclusion would extend to  the whole $\X$, which we have excluded since the beginning.

%Here by `isogenous' we mean isogenous as schemes, not merely pointwise. However, this is immaterial in view of the following observation: Suppose that two of the said elliptic schemes, say corresponding to $i=a, i=b$,  are pontwise  isogenous on $R$. Then the ratios $\omega_{i1}/\omega_{i2}$, $i\in\{a,b\}$, are related through a homography  with rational coefficients, i.e. in $PGL_2(\Q)$. But by continuity these coefficients must be constant on $R$, so that the minimal degree of these isogenies is uniformly bounded on $R$.  

\medskip

Recall that still we know that each sextuple of    functions on $D$  given by $\ell_i,\omega_{i1},\omega_{i2}$, $i$ varying in any pair in $\{1,2,3\}$,  generate over $\C$ a field of transcendence degree at most $5$. Now, a (special case of) a result by Andr\'e (see Theorem 3, \S 7 of \cite{A}, the  case needed here having been proved before by Bertrand) implies  that, either one of the schemes becomes isotrivial on $R$, or, for any pair of them, either they are isogenous or one  of the section is torsion. 

Hence we have a somewhat stronger conclusion with respect to that of Lemma \ref{L.Tsimerman}.
 % any two of the schemes become isogenous on $R$, or one of the two is isotrivial on $R$.  %(This conclusion is somewhat similar to what listed in (i), (ii) and (iii) above, but is stronger and obtained after exploiting that $R$ is algebraic.)  Moreover, by the same results, we deduce that if no single subscheme of the pair  is isotrivial on $R$, then the two sections are linearly dependent on $R$.

Recall also that we are assuming that the variety $Z\subset \R^6$ given by the Betti coordinates of the three sections (restricted to $R$)  contains an algebraic arc. 

\medskip

Suppose now that two of the schemes $i=a,b$ are isogenous. On the said algebraic arc, the four functions $\ell_a,\omega_{1a},\omega_{2a}, \ell_b$ generate a field of transcendence degree at most $3$. By the theorem of Andr\'e we conclude that the two sections are linearly dependent (if we view them as section for a same  elliptic scheme, after applying the isogeny). Hence the two elliptic logarithms $\ell_a,\ell_b$ are linearly dependent over $R$ (up to the addition of periods). (Note that this is stronger than what is delivered by alternatives (i), (ii), (iii) above.)

%Now, the theorem of Bertrand-Andr\'e actually gives more. In fact, let us take equations \eqref{E.periods} for  the two  indices  $a,b$. The  two pairs of periods are linearly related  by the assumption of isogeny, and the coefficients are integers which are locally constant, as observed above.  Therefore we obtain two relations among the four  functions $\ell_a(z),\ell_b(z),\omega_{1a}(z),\omega_{2a}(z)$, with coefficients which are all algebraically dependent on a single one, holding on the inverse image $C'$ of $C$ mentioned above (which is contained in $R$).  Now, this yields a nontrivial algebraic relation between the four above functions, the relation holding necessarily on $R$.  But the Theorem of Bertrand-Andr\'e implies that then the two elliptic logarithms $\ell_a,\ell_b$ are linearly dependent over $R$ (up to the addition of periods).

%This is like saying that the sections are related by a linear dependence, if we view them as section for a same  elliptic scheme, after applying the isogeny. 

If on the other hand the two schemes are not isogenous on $R$, then the same argument proves that one of the sections is torsion on $R$.

\medskip

Let us first work under the  assumption that {\bf none of the three schemes becomes isotrivial on $R$}. This implies that none of the sections is torsion on $R$. Indeed, if for instance the first section were torsion, then, since $\sigma$ is not torsion on $\X$, $R$ would be a fiber of $\lambda$, hence the first scheme would be constant. Similarly, if the second section is torsion on $R$, this means that $\mu\circ\tilde{\sigma} $ is constant on $R$, and the second scheme would be isotrivial. The same for the third one.

Hence, by the above, we may assume that any two of the schemes are isogenous and the corresponding sections are linearly dependent. 

%Suppose first that $R$ is not a fiber of $\lambda$ or $\mu$, so that it projects generically surjectively to $\P_1$ under both $\lambda,\mu$.  

Then we obtain that for $z\in R$ the curves $E_{\lambda(z)}$ and $F_{\mu(\tilde\sigma(z))}$ are isogenous, and so are the curves $E_{\lambda(z)}$ and $E_{\lambda(\tilde\tau\circ\tilde\sigma)(z))}$. For notational convenience it is better (and equivalent) to say that for $z$ on the curve $\tilde\sigma(R)$ the curves $E_{\lambda(z)}$, $F_{\mu(z)}$ and $E_{\lambda(\tilde\tau(z)}$ are isogenous.

We shall apply this fact to the first and third sections, namely with $a=1,b=3$; note that in this case the elliptic schemes derive from the same scheme on $\X$, and with the same sections, but evaluated at different points, namely $\lambda(z), \lambda(\tilde\tau(z))$.

We note that this result  represents a problem of Unlikely Intersections in the pure function field case, namely without an analogue in the number-field case; so {\it a fortiori}  we cannot say that this case comes from the number-field context.

To deal with this issue, we could argue again using the counting method and arguments of the type as in \cite{MZ}. However we can conclude directly by means of Theorem \ref{T.shioda}, as follows.

%Now, let us conclude our proof of Theorem \ref{T.BT}.
 
 In fact, Theorem \ref{T.shioda} allows to conclude the proof of Theorem \ref{T.BT}, even with no exceptional curves,  when none of the schemes is isotrivial on $R$.

% $R$ is not a fiber of $\lambda$ or $\mu$. If $R$ is a fiber of $\mu$ this still works 

If the second scheme is isotrivial then the above argument still works. 

%RILEGGERE For $R$ a fiber of $\lambda$ we can use the result as in Remark  \ref{R.shioda}(i), but we outline an alternative argument, avoiding this. 

If the first or third scheme is isotrivial on $R$,  we contend that the degree of the isogeny between the other two  is   bounded independently of   $R$ (note that indeed the other two must be isogenous since none of them can be isotrivial).  Indeed, these isogenies correspond to certain (modular) algebraic relations between the corresponding $j$-invariants of the relevant elliptic curves (these relations appearing also in the proof of Theorem \ref{T.shioda}). These $j$-invariants are rational functions of bounded degree of the coordinates of the point $p\in \X_0$: in fact, the elliptic curves depend rationally on $p,\tilde\sigma(p), (\tilde\tau\circ\tilde\sigma)(p)$.  If $p$ lies either  in a fiber of $\lambda$ or in a fiber of $\lambda\circ\tilde\tau\circ\tilde\sigma$, these degrees  are bounded independently of the fiber.  Therefore the isogeny degree is bounded, and the finiteness of the exceptional curves follows.

%\begin{remark} \label{R.sharpen}  The result can be sharpened with improved assumptions, in the sense that the above proof shows the {\it finiteness} of $\Ex$ if we allow a further automorphism, i.e., if we define $\Ex$ by imposing the further condition that  $\tilde\sigma\circ\tilde\tau\circ\tilde\sigma$ lies in $T_\mu$. In fact, this increased condition allows us to use Theorem \ref{T.shioda} in all cases.  NOT SURE \end{remark} 

\subsection{Proof of Theorem \ref{T.shioda}}
%\begin{proof} [Proof of Theorem \ref{T.shioda}]  

After an isomorphism, and after taking a   finite degree (possibly ramified) cover of the base $B$, we can suppose that the scheme $\E$ is in Legendre form, so given by an affine equation $E_t:\, y^2=x(x-1)(x-t)$, with $B$ a cover of the $t$-line (deprived of $\{0,1,\infty\}$). (The curve $T$ will be replaced by a component of the lift of $T$ to the extended $B\times B$.) We may assume that the isogeny is cyclic and of degree $d >1$, and have to prove the impossibility of the said situation.

On iterating the isogeny, and without changing the base $B$ or the section, it is not difficult to see that we may assume that the isogeny  has degree larger than any prescribed number. 

We have a projection  from $B\times B$ to $\P_1\times\P_1$, giving the Legendre parameters, denoted $t,u$; these $t,u$ can be viewed, by restriction, as rational functions on $T$.  Since $E_{t(p)}, E_{u(p)}$ are isogenous for $p\in T$, the  curve $T$ projects in this way to one of the familiar modular curves in $\P_1\times\P_1$: one first uses the modular equation for the $j$-invariants  (see for instance \cite{L}), and then  just substitutes for $j$ the usual value in terms of the Legendre parameter.  

 So, the projection $T_0$ of $T$ to $\P_1\times\P_1$ is defined by taking the  said  irreducible  plane equation relating $t,u$ and $T$ will be  a suitable component of a lift of this curve to $B\times B$. The plane equation defining $T_0$ (belonging to the explicit family just described) is symmetric in $t,u$. Then   $(t,u)$ is  a $\C$-generic point of $T_0$, and $\C(T_0)\cong_\C\C(t,u)$. We denote by  $B_t,B_u$ the two copies of $B$, having two corresponding projections to the $t$-line or $u$-line. We thus obtain  two function field extensions $\C(B_t)/\C(t)$, $\C(B_u)/\C(u)$, which are isomorphic (over $\C$) with an isomorphism extending the natural isomorphism $\C(t)\cong_\C\C(u)$ sending  $t\to u$. 

We may view the sections as giving points $s_t\in E_t,s_u\in E_u$ of  Legendre elliptic curves $E_t, E_u$, the points being defined over $\C(B_t),\C(B_u)$ respectively.

 We note that  the extension $\C(t,u)/\C(t)$ (which corresponds to a  smooth model of $T_0$ with a map to $\P_1$) is   unramified outside $t\in S:=\{0,1,\infty\}$, and the places $t\in S$ of such function field extension  correspond exactly to the places $u\in S$: this is because of course $S$ is the exact locus of bad reduction of the Legendre curve, which is the same for isogenous curves.

Define $L$ as the field obtained by adding to $\C(t,u)$ the coordinates of all the $d$-torsion points of $E_t$ and $E_u$.  
The extension $L/\C(t)$ is finite,  Galois and also unramified outside $t\in S$, and similarly for $L/\C(u)$, by general theory.
Moreover, the isogeny $\phi$ is defined over $L$: in fact, the isogeny corresponds to taking a quotient of $E_t$ by a finite cyclic subgroup of order $d$. 
\medskip

Now we make a normalisation which  strictly is not needed, but will ease the arguments. We may replace the sections $s_t,s_u$ by a nonzero multiple $qs_t,qs_u$ without changing the assumptions. The respective  extension-fields of definitions $\C(B_t)/\C(t),\C(B_u)/\C(u)$ can only decrease, and,  if $q$ is sufficiently divisible, the fields  will stabilise, and thus we may suppose that both fields are minimal over all choices of $q$. After this normalisation is performed,  minimality implies that the field of definition of $bs_t$ is precisely $\C(B_t)$ for any positive integer $b$, and similarly for $u$ in place of $t$. 

\medskip

We let $L_t=L\cdot \C(B_t)$ and similarly for $u$ (here of course we view  all fields embedded in a fixed algebraic closure of $\C(t)$).   Note that by symmetry we have the equality of  degrees  $[L_t:L]=[L_u:L]$. Also, the equality $a\phi(s_t)=bs_u$, corresponding to our dependence assumption,  implies that $bs_u$ is defined over $L_t$. By the normalisation, $s_u$ is already defined over $L_t$ and  hence $L_t$ contains $L_u$, whence in fact $L_t=L_u$. 

\medskip

Now comes the crucial point.  Since the section $s_t$ is not torsion, by a theorem of {\sc Shioda}  the minimal  field of definition of the section $s_t$ (that is, $\C(B_t)$)  is ramified above at least one value of $t$ outside of $S:=\{0,1,\infty\}$; this is a somewhat  delicate theorem, which may be proved in different ways. See e.g. the paper \cite{CZ3} for a `modular' proof, a sharpening, and (in an appendix) a simplification of {\sc Shioda}'s proof.  

Let then $t_0$ be such a value. Then $t_0$ is unramified in $L$, but is ramified in $L_t$, and therefore it must be ramified in $L_u$. Take now a value $u_0\in\C$ such that the place $u=u_0$ lifts in $\C(t,u)$ to a place above $t=t_0$. Since $t=t_0$ is unramified in $\C(t,u)$, there are  $d:=[\C(t,u):\C(t)]$ such values. Such a place $u=u_0$ of $\C(u)$ is unramified in $L$ as well (because $L/\C(u)$ is ramified only above $u\in S$). Therefore the place, being ramified in $L_t=L_u$,  must be ramified in $\C(B_u)/\C(u)$. But the number of ramified places in this extension depends only on the section, whereas we can choose $d$ as large as we please. This contradiction proves finally the result.

%\end{proof}

%{\bf NOTE FOR US maybe FOR ALL}: It is not needed to choose $d$ large, the proof in any case gives a bound the degree $d$ of the isogeny, which leaves us with finitely many curves $R$. In fact, it suffices to impose that the $j$-invariants at $\lambda(z)$ and $\lambda(\tilde\tau(z))$ satisfy a GIVEN algebraic relation (which is nontrivial as proved in Lemma \ref{L.non-iso}. SEE ALSO NEXT REMARK. 

\medskip

\begin{remark} \label{R.newproof} A shorter proof of Theorem \ref{T.shioda} can be given on using the sharpening of Shioda's theorem obtained in \cite{CZ3} as Theorem 2.1. This says the following. We are given an elliptic scheme over $B$ as in Thm. \ref{T.shioda} above. 
   Let $\H\to B$ be the universal cover of $B=B(\C)$, which gives by pull-back an elliptic family over $\H$. The fundamental group of $B$ acts on $\H$ by automorphisms (i.e. is represented in $PSL_2(\R)$). On $\H$ one can define a basis of the periods of the said family, as well as a logarithm of any given section of the original elliptic scheme. The fundamental group then acts on the periods and on these logarithms; for instance it sends a given logarithm into a translate of  itself by a period. Theorem 2.1 of \cite{CZ3} says that the subgroup leaving the periods fixed, operates on  a given logarithm (of a non-torsion section) with  an  orbit which is a rank 2 sublattice of the lattice of periods.

Let us now see in short the essentials of this different argument for the present theorem. For $z\in \H$, let $\tau(z)=\omega_2(z)/\omega_1(z)$ be the ratio  of a given basis of periods. We have a universal  cover of $B\times B$ given by $\H\times\H$. For coprime  integers $a,b,c,d$ with $ad-bc>0$, we have also a curve $\V=\V_{a,b,c,d}\subset \H\times\H$ defined by the pairs of points in $\H\times\H$  that correspond to a given cyclic  isogeny, i.e. $\V=\{(z_1,z_2)\in \H\times\H: \tau(z_2)=(a\tau(z_1)+b)/(c\tau(z_1)+d)\}$.   If we fix $\delta:=ad-bc$ to be the degree of our isogeny, this curve covers the curve $T$ inside $B\times B$  (corresponding to the cyclic isogenies of degree $\delta$): this is because $SL_2(\Z)$ operates transitively on the matrices of determinant $n$ (but this is not really needed). %%Here $\tau$ denotes a quotient of two periods in a basis of periods, as a well-defined function in $\H$ (i.e., depending only on the choice of the basis). 

Note that $\V$ is sent to itself by the above mentioned subgroup (acting diagonally as $g(z_1,z_2)=(gz_1,gz_2)$), call it $G$,  fixing the periods (a fact which is essential). Indeed, $G$ fixes the periods, hence fixes $\tau$.

Let us denote by $\ell(z)$ a logarithm of our non-torsion section, well-defined in $\H$. The isogeny relating the elliptic curve over $z_2$ with that over $z_1$ is represented by multiplication by a $\xi=\xi(z_1,z_2)\in\C$ such that $\xi\omega_1(z_2)=a\omega_1(z_1)+b\omega_2(z_1)$,  $\xi\omega_2(z_2)=c\omega_1(z_1)+d\omega_2(z_1)$. 

Note that $\xi(gz_1,gz_2)=\xi(z_1,z_2)$  for $g\in G$, since $G$ fixes the periods. 

Our assumption on the dependence of the sections means that 
 for $(z_1,z_2)\in\V$,  we have $n\xi \ell(z_2)=m\ell(z_1)+\eta(z_1,z_2)$, where $\eta$ is a certain period (for the curve corresponding to $z_1$) and $m,n$ are nonzero integers.  Let $\omega_1,\omega_2$ be as above a basis of periods, viewed as functions well-defined on $\H$. Then, acting on the last equation with $G$, and using that $G$ acts as $\Z^2$  (and that it sends $\V$ into itself and fixes $\xi$) we get, for a suitable integer $h>0$ (which can be taken as the index of $G$ in $\Z^2$), 
\begin{equation}\label{E.shiodagroup}
n\xi (\ell(z_2)+h\omega_i(z_2))=m(\ell(z_1)+h\omega_i(z_1))+\eta(z_1,z_2),\qquad i=1,2.
\end{equation}
Hence, after subtracting the former equation, we obtain  $nh\xi \omega_i(z_2)=mh\omega_i(z_1)$, and so $\tau(z_1)=\tau(z_2)$ (for the present choice of basis of periods), which says that the isogeny has degree $1$.

\smallskip

It seems not easy, if at all possible,  to formulate this argument directly  in purely algebraic terms of Differential Galois Theory.
\end{remark}

\subsection{Proof of Theorem \ref{T.new1}}

We shall use an analogue of Silverman height bound Lemma \ref{L.silverman}, which was recently obtained by Yuan-Zhang \cite{YZ}. We quote just a corollary of their Theorem 6.2.2. Here is the statement:

\begin{lemma}\label{L.YZ} 
Let $A\to \X$ be an abelian scheme over an algebraic surface $\X$; let $\sigma_1,\sigma_2:\X\to A$ be independent sections. There exists a non-empty Zariski-open subset $\X_0\subset \X$ such that the  points of $\X_0$ where both sections take torsion values have uniformly bounded height.
\end{lemma}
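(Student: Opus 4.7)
The plan is to deduce this lemma as a geometric corollary of the height inequality for subvarieties of abelian schemes that lies at the core of \cite{YZ} (essentially equivalent, for our purposes, to the version in \cite{DimGaoHabegger}). Set $\cB := A\times_{\X} A$, an abelian scheme over $\X$, and consider the combined section
\[
\phi := (\sigma_1,\sigma_2)\ :\ \X\longrightarrow \cB.
\]
The locus in $\X$ where both $\sigma_1(x)$ and $\sigma_2(x)$ are torsion is precisely $\phi^{-1}(\cB^{\mathrm{tors}})$. Fixing a relatively ample symmetric line bundle on $\cB$ and the associated fiberwise N\'eron-Tate height $\hat h$, one has $\hat h(\phi(x))=0$ at every such point $x$. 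The task is then to bound the Weil height $h_{\X}(x)$ in terms of $\hat h(\phi(x))$, uniformly on a non-empty open of $\X$.

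The key input is the general height inequality: for an abelian scheme $\pi:\cB\to\X$ over a quasi-projective base and a non-degenerate subvariety $V\subset \cB$ --- meaning one on which the Betti map has maximal real rank $\dim V$ at some point --- there exist a non-empty Zariski open $V_0\subset V$ and constants $c,C>0$ such that
\[
h_{\X}(\pi(v))\ \le\ c\,\hat h(v)\,+\,C\qquad \text{for every}\ v\in V_0(\overline\Q).
\]
Applied to $V=\phi(\X)$ and to $v=\phi(x)$, this gives, setting $\X_0:=\phi^{-1}(V_0)$, the bound $h_{\X}(x)\le C$ at every $x\in \X_0$ where $\phi(x)\in\cB^{\mathrm{tors}}$, which is precisely the claim of the lemma. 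Both $V_0$ and hence $\X_0$ are Zariski open and non-empty because $\phi$ is a closed immersion onto its image and the height inequality holds on an explicit dense open of $V$.

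The main (and essentially only) obstacle is to verify non-degeneracy of $V=\phi(\X)$. Locally on $\X$, after trivializing the relevant period lattices, the Betti map of $\phi$ takes the form
\[
(\beta_1,\beta_2)\ :\ U\longrightarrow (\R/\Z)^{2g_1}\times (\R/\Z)^{2g_2},
\]
and one needs its real Jacobian to have rank $2=\dim \X$ somewhere on $\X$. A preparation via Poincar\'e complete reducibility and the Lang-N\'eron theorem reduces, up to isogeny, to the case where $A$ has no non-trivial isotrivial factor; the independence hypothesis then becomes linear independence of $\sigma_1,\sigma_2$ in the Mordell-Weil group of the generic fiber modulo torsion. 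At this stage one invokes the now-standard rank criterion of Gao (extending earlier results of Andr\'e and Bertrand, and relying in its full generality on the Ax-Schanuel theorem of Bakker-Tsimerman that is used elsewhere in this paper): for independent sections of a trace-free abelian scheme over a base of dimension $n$, the combined Betti map attains the maximal rank $n$ on a non-empty Zariski open subset. The hypothesis of the lemma --- two independent sections and $\dim\X=2$ --- matches this criterion exactly, and the $\X_0$ of the statement is the locus where the rank-$2$ condition holds.
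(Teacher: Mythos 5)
The paper itself supplies no proof --- it states the lemma as a corollary of Yuan--Zhang's Theorem~6.2.2 --- so your proposal is in effect an attempt to reconstruct how that specialization goes. The architecture (pass to $\cB=A\times_\X A$, form $\phi=(\sigma_1,\sigma_2)$, and apply the height inequality for non-degenerate subvarieties to bound $h_\X$ by $\hat h$ on a dense open) is the right one. But the step verifying non-degeneracy is where it breaks. First, a minor but persistent slip: for a $d$-dimensional subvariety $V$ of an abelian scheme, non-degeneracy in the Gao--Habegger / Dimitrov--Gao--Habegger / Yuan--Zhang sense requires the Betti map to have real rank $2d$, not $d$; for $V=\phi(\X)$ with $\dim\X=2$ one needs rank $4$, whereas you repeatedly write ``maximal real rank $\dim V$'' and ``rank $2=\dim\X$''.

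Second, and more substantively, the asserted criterion --- that for a trace-free abelian scheme with independent sections the combined Betti map attains maximal rank on a dense open --- is false. Take $\X=B\times B'$ with $B,B'$ curves, let $A$ be the pullback to $\X$ of a non-isotrivial elliptic scheme $E\to B$, and let $\sigma_1,\sigma_2$ be pulled back from two independent sections of $E$. Then $A$ has trivial $\C(\X)/\C$-trace and the sections are independent, yet the combined Betti map factors through the projection $\X\to B$ and hence has real rank at most $2<4$: $V=\phi(\X)$ is degenerate, and the inequality you invoke is vacuous. The lemma still holds in this example --- by Masser--Zannier over $B$ the torsion locus is $F\times B'$ with $F\subset B$ finite, a proper closed subset that one simply excises from $\X_0$ --- but this degenerate branch requires a separate argument, e.g.\ via the structure theory of degenerate subvarieties (DGH, Gao), reducing along the quotient $\X\to B$, or a direct appeal to the more robust form of the inequality that Yuan--Zhang actually establish, which is what the paper does by citing Theorem~6.2.2 without unpacking it. As written, your proof only covers the case when $\phi(\X)$ happens to be non-degenerate.
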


Let 
\begin{equation*}
 \T=\{x\in\X: \sigma_i(x)\ \hbox{is torsion on $\E_i$ for $i=1,2,3$}\}.
\end{equation*}

By the lemma, there is a non-empty Zariski-open subset $\X_0\subset \X$ such that the points in $\T_0:=\T\cap\X_0$ have bounded height, hence in particular their degree over a number field of definition $K$ tends to infinity. 
Let then $x\in \T_0$ be a point such that its degree $d=[K(x):K]$ over $K$ is sufficiently large.

By bounded height, as in the proofs above in this paper, we may find a given compact set $\Delta\subset \X_0(\C)$, independent of $x$, such that at least $d/2$ conjugates of $x$ lie in $\Delta$. 

We may now partition $\Delta$ in a large but finite number of compact subsets $\Delta_j$, where we may assume that the periods related to the three schemes, and the logarithms of the sections, are well-defined in each $\Delta_j$. (Alternatively, we may assume since the beginning that $\Delta$ is simply-connected, on removing finitely many real-analytic threefolds.)
Whatever choice we make, we may assume that there is a compact $\Delta'$ independent of $x$, where the above functions are well-defined, and containing $>cd$ conjugates of $x$, where $c>0$ is independent of $x$.

\medskip

As before, we may write, for $\z\in \Delta'$,
\begin{equation} 
\ell_i(\z)=\beta_{1i}(\z)\omega_{1i}(\z)+\beta_{2i}(\z)\omega_{2i}(\z),\qquad i=1,2,3,
\end{equation}
where $\omega_{ij}$ are the periods and $\ell_i$ are the elliptic logarithms of the sections. 

\medskip

For $\z=x\in\T_0$, the $\beta_{ij}(\z)$ are rational numbers. 

Again by bounded height, Lemma \ref{L.masser} yields that the orders of torsion are bounded by $\ll d^2$.  So the denominators of the said rationals are likewise bounded. 

We apply the Pila-Wilkie estimates to the definable variety defined in $\R^6$ as the image $Z$ of the map $\z\mapsto (\beta_{ij}(\z))_{i=1,2,3, j=1,2}$ on $\Delta'$. Comparing all the bounds, we deduce that if $d$ is large enough, the algebraic part of $Z$ is non-empty, namely $Z$ contains a real algebraic arc.

Let us consider its pre-image $C'$ in $\Delta'$, which is a real-analytic set of dimension $\geq 1$. As in the previous proof, if we restrict to $C'$ the nine functions  $\ell_i, \omega_{i,1},\omega_{i,2}$ for $i=1,2,3$, the equations \ref{E.betti} will provide at least two independent algebraic equations among these functions (more precisely, each six-tuple of functions, namely  for $i=1,2$, $i=1,3$ and $i=2,3$, will satisfy a nontrivial algebraic relation).  We apply again Lemma \ref{L.Tsimerman} which provides the existence of a complex-analytic curve $R\subset \X$ containing $C'$ such that one of the following alternatives holds on $R$: 

- two of the elliptic schemes become isogenous on $R$, or 

- or  an elliptic scheme becomes isotrivial on $R$, or

- at least two of the sections become torsion on $R$

 In particular, we deduce that $R$ is an algebraic curve.

\medskip

Suppose that two of the schemes, say the first two of them, are isogenous but not isotrivial on $R$ and that at most one section is torsion (so the second and third alternatives do not hold). Then consider the pair formed by the first and third schemes. Associated to these schemes, we have the four periods, the two logarithms and the corresponding Betti maps. As before, we deduce that these six functions are algebraically dependent on $R$. But by the theorems of Andr\'e, this implies that these   schemes are isogenous, hence all the three schemes become isogenous on $R$. (Then among the five analytic functions consisting on the two periods and the three logarithms, at most three of them are algebraically independent.) 
Then we use Pila's result on the  Andr\'e-Oort Conjecture for $\mathbf{C}^n$  (in fact the functional part of the proof would suffice).   We argue as follows.  The three $j$-invariants $(j_1,j_2,j_3)$ define a rational map $J:\X\to \A^3$, and the closure of the image is a  surface in $\A^3$. Since the schemes become isogenous on $R$, the points of $J(R)$ with all three  CM coordinates are Zariski-dense in $J(R)$. Then the image $J(R)$ must be a  fiber-product of two modular curves by Pila's result. (In fact,  otherwise two of schemes would be identically isogenous on $\X$.) 
But the degree of the said surface is bunded and it follows that the degrees of these modular curves are also bounded. Hence there are only finitely many possibilities for them, and the same holds for $R$.
\footnote{The recourse to this case of Andr\'e-Ooort could avoided, for instance on using the piece of Pila-s proof dealing with the algebraic part  of the relevant definable set.} But then removing from $\X_0$ the finitely many curves $R$ which arise, we would conclude by the previous arguments.

%Again by Andr\'e's theorem, we deduce that the three sections are pairwise  dependent.

\medskip

Let us now consider the case when the first  of the elliptic schemes becomes isotrivial on $R$.  So $R$ is a component of a curve of the shape $j_1(x)=c$, where $j_1$ is the $j$-invariant of the first scheme, viewed as a rational function on the base $\X$. Therefore $R$ has bounded degree (with respect to a  given embedding of $\X$). 

The six functions  $\ell_i, \omega_{i,1},\omega_{i,2}$ for $i=2,3$, become algebraically dependent on $C'$, hence on $R$. Again by Andr\'e-Bertrand, we deduce that either two of the schemes are isogenous or one of the section $\sigma_2$ or $\sigma_3$ is torsion on $R$. But this leads to at most finitely many curves $R$ because if the isogeny degree or the torsion order surpass a certain limit then the degree of the relevant curve grows.

\smallskip

Let us finally consider the case when none of the schemes is isotrivial on $R$ and $\sigma_1,\sigma_2$ become torsion on $R$. 
 
We  can then apply the main theorem of \cite{CMZ} which provides the finiteness of such curves.

\medskip

But this is only a sample and much more general things should follow with the same method.  

\appendix

\section{Relative Manin Mumford}
\subsection{The conjecture over $\Qbar$ and $\C$}

For a relative abelian scheme $\pi:A\rightarrow S$, we say that a subscheme $V\subset A$ is a torsion variety - or simply, torsion - if $V\subset A[N]$ for some positive integer $N$. If $V$ is reduced, this is equivalent to asking that for all $v\in V$, the point $v\in A_{\pi(v)}$ is torsion.
We denote by $Y_{tor}$ the union of all torsion subvarieties of $Y$.

If $B\subset A$ is an abelian subvariety, then for any positive integer $N$,  we say that any irreducible component of $B+A[N]$ is a torsion coset. 

The following is known as the \emph{Relative Manin-Mumford conjecture (RMM).}

\begin{conj}[RMM]\label{conj:RMM}
Let $\pi_0:A_0\ra S_0$ be a relative Abelian scheme, where $S_0$ is an  irreducible complex variety.  Let $Y_0\subset A_0$ be a subvariety with a Zariski-dense set of torsion points, such that $\dim Y_0+\dim S_0 <\dim A_0$. Then either $Y_0$ doesn't dominate $S_0$, or else its contained in a proper torsion coset. 
\end{conj}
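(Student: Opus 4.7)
The plan is to deduce Conjecture~\ref{conj:RMM} over $\C$ from its $\Qbar$-version by a spreading-out and specialization argument, with the Bakker--Tsimerman Ax-Schanuel theorem (Lemma~\ref{L.Tsimerman}) as the key transcendence tool. We may assume $Y_0$ dominates $S_0$, since otherwise the conclusion holds trivially. I would choose a finitely generated $\Qbar$-subalgebra $R\subset\C$ over which $\pi_0:A_0\to S_0$ and $Y_0\subset A_0$ are defined, and set $B:=\mathrm{Spec}(R)$. Spreading out produces a smooth abelian scheme $\pi:\cA\to\cS$ together with a closed subvariety $\cY\subset\cA$, all defined over $\Qbar$, whose geometric generic fibres over $B$ recover the original $\C$-data. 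After shrinking $B$ to a suitable Zariski-open $U\subset B$, for every $b\in U(\Qbar)$ the specialized objects $\cY_b\subset\cA_b\to\cS_b$ are $\Qbar$-varieties of the same dimensions, so the dimension inequality $\dim\cY_b+\dim\cS_b<\dim\cA_b$ transfers and the dominance of $\cY_b$ over $\cS_b$ persists.

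The crucial step is to show that a Zariski-dense set of $b\in U(\Qbar)$ has torsion points Zariski-dense in $\cY_b$. Writing $\cY_N:=\cY\cap\cA[N]$, the geometric generic fibre over $B$ of $\bigcup_N\cY_N$ equals the torsion locus of $Y_0$, which by hypothesis is Zariski-dense in $Y_0$; hence the countable union $\bigcup_N\cY_N$ is Zariski-dense in $\cY$. To promote this density to density on $\Qbar$-fibres, I would argue by contradiction: were the torsion of $\cY_b$ to lie in a proper closed subvariety $Z_b\subsetneq\cY_b$ for $b$ in a Zariski-dense subset of $U$, spreading the $Z_b$ out would produce a proper closed subvariety $\mathcal{Z}\subsetneq\cY$ containing $\bigcup_N\cY_N$, contradicting density on the generic fibre. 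The role of Lemma~\ref{L.Tsimerman} here is to rule out the pathological scenario where the $Z_b$ vary transcendentally in $b$: the Ax-Schanuel statement forces any algebraic relations among Betti coordinates of torsion sections of $\cA/\cS$ to be witnessed by $\Qbar$-algebraic structure over $B$, so that the $Z_b$ indeed assemble into an algebraic $\mathcal{Z}$ defined over $\Qbar$.

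Once density of torsion on $\cY_b$ is established for a Zariski-dense family of $b\in U(\Qbar)$, the $\Qbar$ form of Conjecture~\ref{conj:RMM} produces, for each such $b$, a proper torsion coset $T_b\subset\cA_b$ with $\cY_b\subset T_b$. Since the proper torsion subschemes of $\cA/\cS$ of bounded relative codimension form a countable union of $\Qbar$-schemes flat over $B$, a standard Noetherian/Hilbert-scheme argument shows that the family $\{T_b\}$ assembles into a single proper torsion subscheme $T\subset\cA$ (after replacing $B$ by a finite cover if necessary) with $\cY\subset T$. Restricting $T$ to the geometric generic fibre over $B$ then yields a proper torsion coset of $A_0$ containing $Y_0$, completing the argument.

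The principal obstacle is the density-transfer step. Density of torsion on the geometric generic fibre $Y_0$ does not automatically descend to density on a given $\Qbar$-fibre, since a countable union of proper closed subvarieties can be Zariski-dense in the total space without being dense on any particular fibre. It is precisely here that the Bakker--Tsimerman theorem is required, to prevent the torsion locus from collapsing onto a transcendentally-moving family of proper subvarieties of the specialized fibres.
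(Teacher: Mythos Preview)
Your proposal has a genuine gap at precisely the point you yourself flag as the principal obstacle. The density-transfer step does not work as written, and the invocation of Lemma~\ref{L.Tsimerman} does not repair it. That lemma concerns non-isogenous elliptic schemes over a base and algebraic relations among periods and a single elliptic logarithm for each; it says nothing about why a family of closed subsets $Z_b\subsetneq\cY_b$ (defined only by the condition ``contains all torsion of $\cY_b$'') should vary algebraically in $b$. There is no Betti map in sight at this point of the argument, and no mechanism is given that would force the $Z_b$ to assemble into a $\Qbar$-scheme $\mathcal Z$. In fact, density of $\bigcup_N\cY_N$ in $\cY$ does not in general imply density of $\bigcup_N(\cY_N)_b$ in $\cY_b$ for any $\Qbar$-point $b$: a countable union of closed subvarieties can be Zariski-dense in the total space while each one has small fibres over $B$. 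The subsequent step of assembling the torsion cosets $T_b$ into a single $T$ has the same defect.

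The paper avoids this problem by \emph{not} specializing to fibres over $B$ at all. Instead it introduces the stronger Conjecture~\ref{conj:main}, which allows the components of $Y_{\mathrm{tor}}$ to have positive dimension $\ge r$. After spreading out to $\cA\to\cS\to T$ over $\Qbar$ (with $\dim T=\dim B$), a torsion \emph{point} of $Y_0$ becomes a torsion \emph{subvariety} of dimension $\dim T$ inside $\cY$; so one applies Conjecture~\ref{conj:main} to the total family with $r=\dim T$, and the dimension inequality is preserved. This reduces the $\C$-case of RMM to the $\Qbar$-case of Conjecture~\ref{conj:main}. The substantive work is then to deduce Conjecture~\ref{conj:main} over $\Qbar$ from RMM over $\Qbar$, which the paper does via a foliation by ``homology leaves'' and Gao's mixed Ax--Schanuel theorem for the universal abelian variety (not Lemma~\ref{L.Tsimerman}). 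The upshot is that the transcendence input enters at a completely different place from where you propose to use it: it is used to control unlikely intersections of $Y$ with the homology-leaf foliation on the total $\Qbar$-space, rather than to force algebraicity of a family of degeneracy loci over the spreading-out base.
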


We note that RMM has been recently announced by Gao-Habbeger over $\Qbar$. We wish to explain how to generalize this to $\C$. Our specific theorem doesn't rely on their methods but instead is a formal implication:

\begin{thm}\label{thm:RMM}
The RMM over $\C$ is a consequence of the RMM over $\Qbar$. 

\end{thm}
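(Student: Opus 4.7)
The plan is to argue by contradiction: assume RMM holds over $\Qbar$ but a counterexample $(\pi_0 : A_0 \to S_0, Y_0)$ to RMM exists over $\C$, and produce from it a counterexample over $\Qbar$ by a specialization argument. The first step is to spread out. All the data is defined over a finitely generated subfield $K \subset \C$; after replacing $K$ by $K \cdot \Qbar$ I may assume $\Qbar \subset K$, so $K = \Qbar(B)$ for some integral $\Qbar$-variety $B$. I thereby obtain a family $(\pi : \mathcal{A} \to \mathcal{S}, \mathcal{Y} \subset \mathcal{A})$ of relative abelian schemes with subvarieties over $\Qbar$, parametrized by $B$, whose complex generic fiber recovers the original counterexample. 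Shrinking $B$ to a Zariski-open subset, I may assume fiber dimensions are constant in $b \in B$, that $\mathcal{Y}_b \to \mathcal{S}_b$ is dominant, and that the dimension inequality $\dim \mathcal{Y}_b + \dim \mathcal{S}_b < \dim \mathcal{A}_b$ holds throughout.

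Next I would handle the ``no torsion coset'' condition. For each integer $D$, torsion cosets in $\mathcal{A}/\mathcal{S}$ of degree at most $D$ are parametrized by a $\Qbar$-constructible family $\mathcal{T}_D \to B$, and the condition ``$\mathcal{Y}_b$ is contained in some member of $\mathcal{T}_{D,b}$'' is itself $\Qbar$-constructible in $b$. Since $Y_0$ is not contained in any torsion coset, this condition fails at the complex generic point of $B$ for every $D$, hence fails on a Zariski-open $U_D \subset B$. The countable intersection $U_\infty := \bigcap_D U_D$ parametrizes fibers $\mathcal{Y}_b$ not contained in any torsion coset.

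The essential step is to exhibit $b \in U_\infty(\Qbar)$ for which $\mathcal{Y}_b$ still contains a Zariski-dense set of torsion points. Fix a Zariski-dense sequence of torsion points $\{p_n\} \subset Y_0(\C)$; after passing to a finite cover of $B$, each $p_n$ extends to a torsion-valued section $\sigma_n$ of $\mathcal{Y}/B$. For each proper $\Qbar$-subvariety $Z \subsetneq \mathcal{Y}$, only finitely many of the sections $\sigma_n$ factor through $Z$, and each of the remaining ones specializes into $Z_b$ only on a proper closed subset of $B$. A Hilbert-irreducibility argument, organized by using the Bakker-Tsimerman Ax-Schanuel theorem (Lemma \ref{L.Tsimerman}) to translate non-density of torsion into a uniformly-bounded-complexity algebraic condition on Betti coordinates, then yields $b \in U_\infty(\Qbar)$ at which the specializations $\{\sigma_n(b)\}$ remain Zariski-dense in $\mathcal{Y}_b$. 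Applying RMM over $\Qbar$ to $(\mathcal{A}_b \to \mathcal{S}_b, \mathcal{Y}_b)$ then produces the desired contradiction.

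The main obstacle is the density-preservation step. Since $B(\Qbar)$ is countable, one cannot naively intersect countably many Zariski-open sets and expect $\Qbar$-points to survive, so one must exploit that the exceptional loci have uniformly bounded complexity. This is precisely where the transcendence methods of the paper enter: the Ax-Schanuel result controls how Zariski-density of torsion manifests in the Betti image, reducing the matter to a definable family that behaves well under specialization to $\Qbar$-rational points of $B$.
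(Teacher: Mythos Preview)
Your approach is genuinely different from the paper's, and the density-preservation step you flag as ``the main obstacle'' is in fact a real gap that you have not filled.

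The paper avoids specialization entirely. Its key idea is to first generalize RMM to Conjecture~\ref{conj:main}, which replaces ``Zariski-dense set of torsion points'' by ``Zariski-dense set of torsion components of dimension $\ge r$'' together with the inequality $r+\dim A>\dim Y+\dim S$ (the case $r=0$ being RMM). With this formulation the reduction from $\C$ to $\Qbar$ becomes essentially trivial: one spreads out $A_0\to S_0$ and $Y_0$ to $\Qbar$-varieties $A'\to S'\to T$, $Y'\subset A'$, with generic fiber over $T$ recovering the original data. Each torsion point in $Y_0$ spreads out to a torsion subvariety of $Y'$ of dimension $\dim T$, and these remain Zariski-dense in $Y'$. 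All of $\dim A',\dim S',\dim Y'$ exceed their unprimed analogues by $\dim T$, so the inequality with $r$ replaced by $r+\dim T$ is preserved. One then applies Conjecture~\ref{conj:main} over $\Qbar$ directly to $(A'\to S',Y')$; no $\Qbar$-point of $T$ needs to be chosen. The real work is then to deduce the generalized conjecture over $\Qbar$ from plain RMM over $\Qbar$, and it is there (not in the descent from $\C$ to $\Qbar$) that the paper invokes Ax--Schanuel, specifically Gao's mixed Ax--Schanuel for the universal abelian variety, combined with a foliation by ``homology leaves'' and the classification of weakly special subvarieties of $\bA_g$.

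Your specialization argument, by contrast, must produce a $\Qbar$-point $b$ of $B$ lying in the countable intersection $U_\infty=\bigcap_D U_D$ and simultaneously preserving Zariski-density of the torsion sections. Over a countable field this is not automatic: the complements $B\setminus U_D$ can exhaust $B(\Qbar)$ unless one has uniform degree bounds, and likewise for the loci where a given torsion section falls into a given proper subvariety of $\mathcal{Y}$. Your appeal to Lemma~\ref{L.Tsimerman} is not adequate here: that lemma concerns three pairwise non-isogenous elliptic schemes over a base and says nothing about abelian schemes of arbitrary relative dimension, nor does it by itself yield the uniform complexity bounds you would need for a Hilbert-irreducibility argument. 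As written, the third paragraph is an outline of a strategy rather than a proof, and the paper's device of passing to the $r$-generalized statement is precisely what circumvents this difficulty.
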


To prove this thoerem, we shall borrow ideas from the recent work\cite{BCFN} on Ax-Schanuel, though we do not use their specific theorems. 

We will in fact consider the following generalized version, with the case of $r=0$ being the RMM:

\begin{conj}\label{conj:main}
Let $\pi:A\ra S$ be an Abelian scheme over an irreducible complex variety $S$ and $Y\subset A$ be irreducible and dominate $S$. Let $Y_{tor}^{\geq r}$ denote the set of irreducible components of $Y_{tor}$ of dimension at least $r$. Suppose that $r+\dim A > \dim Y+\dim S$. If $Y_{tor}^{\geq r}$ is Zariski dense in $Y$, then $Y$ is contained in a proper torsion coset.
\end{conj}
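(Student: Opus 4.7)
The plan is to deduce Conjecture \ref{conj:main} over $\C$ from its counterpart over $\Qbar$ via a spreading-and-specialization argument, in the style of \cite{BCFN}. Because $Y$ is Noetherian and $\bigcup Y_{tor}^{\geq r}$ is Zariski-dense in $Y$, finitely many components $Z_1,\ldots,Z_N$ of $Y_{tor}^{\geq r}$ already have Zariski-dense union in $Y$; fix such a choice, with each $Z_i \subset A[N_i]$. Let $k \subset \C$ be a finitely generated extension of $\Qbar$ over which $(A \to S,\,Y,\,Z_1,\ldots,Z_N)$ is defined, pick an integral $\Qbar$-variety $T$ with function field $k$, and spread the data out to an abelian scheme $\mathcal{A} \to \mathcal{S} \to T$ together with $\mathcal{Y}, \mathcal{Z}_i \subset \mathcal{A}$ satisfying $\mathcal{Z}_i \subset \mathcal{A}[N_i]$ and recovering the original data over the generic point $\eta \in T$.

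Next I would verify that the hypothesis of the conjecture survives to a Zariski-dense set of closed points $t \in T(\Qbar)$. The numerical inequality $r + \dim \mathcal{A}_t > \dim \mathcal{Y}_t + \dim \mathcal{S}_t$ is constant in $t$; by generic flatness each $\mathcal{Z}_i$ has fibres of dimension $\geq r$ over a dense open of $T$; and Zariski-density of $\bigcup_i (\mathcal{Z}_i)_t$ in $\mathcal{Y}_t$ is an open condition on $T$, valid at $\eta$ hence on a dense open. Applying Conjecture \ref{conj:main} over $\Qbar$ to each such $\mathcal{Y}_t$ yields a proper torsion coset $\mathcal{B}_t + \tau_t \subset \mathcal{A}_t$ containing $\mathcal{Y}_t$.

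The final and most delicate step is to assemble these pointwise cosets into a single torsion coset on the generic fibre. Since $\mathcal{A}/\mathcal{S}$ has only countably many abelian subschemes (parameterised e.g.\ by integral sub-Hodge structures of $H_1$), a pigeonhole argument extracts a Zariski-dense subset of $t \in T(\Qbar)$ with a common underlying abelian subscheme $\mathcal{B} \subset \mathcal{A}$. The main obstacle is then to control the torsion translates $\tau_t$, whose orders could a priori grow with $t$. I would argue that because each $Z_i$ lies in $A[N_i]$ for fixed $N_i$, the order of the minimal torsion representative of $\tau_t$ stays bounded as $t$ varies; a further pigeonhole then produces a dense set of $t$ sharing a common global $\tau$, giving a torsion coset $\mathcal{B} + \tau$ in $\mathcal{A}$ whose fibre over $\eta$ contains $Y$. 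This last uniformity---morally a functional Hilbert irreducibility statement for the Betti map, in the spirit of \cite{BCFN}---is where the real substance of the proof would lie.
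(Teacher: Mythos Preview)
Your first step contains a genuine error. You claim that by Noetherianity, finitely many components $Z_1,\ldots,Z_N$ of $Y_{tor}^{\geq r}$ already have Zariski-dense union in $Y$. But $Y$ is irreducible, and (unless $Y$ is itself torsion, in which case the conclusion is immediate) each $Z_i$ is a proper closed subvariety of $Y$; a finite union of proper closed subvarieties of an irreducible variety is never Zariski-dense. Noetherianity tells you that the \emph{closure} of an increasing union stabilizes, not that finitely many terms suffice---and here the closure is all of $Y$ from the outset. So you are forced to work with infinitely many $Z_i$, with torsion orders $N_i$ tending to infinity, and then your final uniformity step (bounding the order of $\tau_t$ via the ``fixed $N_i$'') collapses completely: there is no a priori bound on the torsion order of the coset produced at each $t$, and the pigeonhole over countably many $(\mathcal B,\tau)$ has no reason to yield a Zariski-dense set of specializations sharing a common pair.

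The paper's reduction from $\C$ to $\Qbar$ is quite different and avoids specialization to closed points altogether. After spreading out to $\mathcal A\to\mathcal S\to T$ with everything defined over $\Qbar$, one does \emph{not} look at fibres over $t\in T(\Qbar)$. Instead one observes that $\mathcal S$ is itself an irreducible $\Qbar$-variety and $\mathcal A\to\mathcal S$ an abelian scheme over it; the spread-out torsion loci $\mathcal Z_i$ (dominant and generically finite over $T$) have dimension $\geq r+\dim T$, while $\dim\mathcal A$, $\dim\mathcal S$, $\dim\mathcal Y$ have each increased by $\dim T$. Hence the numerical hypothesis of the conjecture holds for $\mathcal Y\subset\mathcal A$ with $r$ replaced by $r+\dim T$, and one applies the $\Qbar$ case of the conjecture \emph{directly} to $\mathcal Y$, obtaining a single torsion coset of $\mathcal A$ containing $\mathcal Y$; restriction to the generic fibre over $T$ gives the desired coset for $Y$. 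The entire purpose of introducing the parameter $r$ (rather than stating only RMM) is to make this one-step reduction work without any reassembly or pigeonhole argument.
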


Note that this is a special case of the zilber-pink conjecture, and is in fact equivalent to ZP restricted to a special case of weakly special subvarieties (those with no abelian part).

We will prove the following implication:

\begin{thm}\label{thm:main}

Conjecture \ref{conj:main} is a consequence of the RMM over $\Qbar$.

\end{thm}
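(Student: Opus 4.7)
The strategy is to derive Conjecture \ref{conj:main} over $\C$ from RMM over $\Qbar$ via two formal reductions: (i) spreading out from $\C$ to $\Qbar$, and (ii) slicing to replace the ``dim $\geq r$ torsion component'' hypothesis with the ``torsion points'' hypothesis of RMM.

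I would first spread out a putative counterexample. Given $(\pi\colon A\to S,\,Y,\,r)$ over $\C$ satisfying the hypotheses of Conjecture \ref{conj:main} but not its conclusion, choose a finitely generated $\Qbar$-subalgebra $R\subset\C$ over which the data descend, and set $B=\mathrm{Spec}(R)$ with $d_B:=\dim B$. This produces an abelian scheme $\tilde A\to\tilde S$ with a structure map $\tilde S\to B$ and an irreducible subvariety $\tilde Y\subset\tilde A$, all defined over $\Qbar$, whose fibres over the generic $\C$-point of $B$ recover $(A,S,Y)$. Setting $\tilde r:=r+d_B$, all three of $\dim\tilde A,\dim\tilde S,\dim\tilde Y$ increase by $d_B$, so the inequality $r+\dim A>\dim Y+\dim S$ becomes $\tilde r+\dim\tilde A>\dim\tilde Y+\dim\tilde S$. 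Each torsion component $V\subset Y$ of dimension $\geq r$ spreads (after a harmless finite extension of $\Qbar(B)$) to a torsion component $\tilde V\subset\tilde Y$ of dimension $\geq\tilde r$, and the Zariski-density of the $V$'s in $Y$ propagates to Zariski-density of the $\tilde V$'s in $\tilde Y$.

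Next I would reduce to RMM by slicing. For a generic irreducible subvariety $Z\subset\tilde S$ of codimension $\tilde r$ over $\Qbar$, form $Y'_Z:=\tilde Y\cap\pi^{-1}(Z)\subset\tilde A|_Z$. The dimension inequality becomes $\dim\tilde A|_Z>\dim Y'_Z+\dim Z$, matching the hypothesis of RMM, and each $\tilde V$ of dimension $\geq\tilde r$ contributes torsion points to $Y'_Z$ that, for suitably generic $Z$, are Zariski-dense in $Y'_Z$. Applying RMM over $\Qbar$ yields a proper torsion coset $T_Z\subset\tilde A|_Z$ containing $Y'_Z$, for each such $Z$. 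Finally, writing $T_Z=B_Z+t_Z$, the $T_Z$ have uniformly bounded complexity, so among the countably many possible ``types'' $(B,t)$ some must occur for a Zariski-dense family of $Z$'s, and such a $(B,t)$ must come by restriction from a globally defined abelian subscheme $B\subset\tilde A$ over a Zariski-open of $\tilde S$ and a globally defined torsion section $t$. Then $\tilde Y\cap\pi^{-1}(Z)\subset B|_Z+t|_Z$ for Zariski-dense $Z$, whence $\tilde Y\subset B+t$, a proper torsion coset of $\tilde A$; specializing back to the generic $\C$-point of $B$ places $Y$ in a proper torsion coset of $A$, contradicting the assumption.

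The main obstacle I foresee is the globalization step: showing that a Zariski-dense collection of slice-wise proper torsion cosets $T_Z\subset\tilde A|_Z$ genuinely arises by restriction from a single proper torsion coset of the ambient $\tilde A\to\tilde S$. This is a rigidity statement for families of abelian subschemes and torsion sections, rather than a direct consequence of RMM, and is the step where the ideas borrowed from \cite{BCFN} on Ax-Schanuel are needed. A secondary subtlety is to ensure that the spread-out torsion components $\tilde V$ are genuinely Zariski-dense in $\tilde Y$ and, after slicing by a suitably generic $Z$, continue to give Zariski-dense torsion points in $Y'_Z$; this requires care about the position of $Z$ with respect to the images $\pi(\tilde V)\subset\tilde S$, as some $\tilde V$ may have comparatively small projection to $\tilde S$ despite having large dimension in $\tilde A$.
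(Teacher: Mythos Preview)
Your spreading-out step (i) matches the paper's reduction lemma exactly, including the shift $r\mapsto r+\dim T$. The divergence is in step (ii): the paper does \emph{not} slice by codimension-$\tilde r$ subvarieties of the base and then try to globalize the resulting torsion cosets. Your slicing route has two genuine gaps.

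The first is the assertion that ``the $T_Z$ have uniformly bounded complexity''. RMM, as stated, produces for each individual $Z$ \emph{some} proper torsion coset $T_Z=B_Z+t_Z$ with no control whatsoever on the isogeny class of $B_Z$ or the order of $t_Z$; uniformity would require a uniform RMM, which is strictly stronger than what you are assuming. Without it your pigeonhole collapses: over $\Qbar$ both the set of admissible $Z$'s and the set of types $(B,t)$ are countable, so nothing forces one type to occur on a Zariski-dense family. The second gap is the globalization of $B_Z$. When $\tilde r<\dim\tilde S$ and $Z$ is a general linear section one could hope for a Lefschetz argument giving $\pi_1(Z)\twoheadrightarrow\pi_1(\tilde S)$, but when $\tilde r=\dim\tilde S$ (which occurs, e.g., after spreading out any instance with $r=\dim S$) the slice $Z$ is zero-dimensional and no such rigidity is available. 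You flag this and gesture at \cite{BCFN}, but supply no mechanism. (A side remark: your stated worry that a torsion component $\tilde V$ might have ``comparatively small projection to $\tilde S$'' is not actually an issue, since $A[N]\to S$ is finite \'etale and hence $\dim\pi(\tilde V)=\dim\tilde V\ge\tilde r$; the real density problem is rather that you need a single $\Qbar$-rational $Z$ meeting enough of countably many $\pi(\tilde V_i)$, and over $\Qbar$ there is no ``very general'' choice.)

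The paper bypasses slicing entirely. It observes that every component of $A[N]$ is the image of a leaf of an algebraic foliation on the total space (the ``homology leaves'' coming from flat sections of $R_1\pi_*\Z$), so by constructibility the hypothesis on $Y_{tor}^{\ge r}$ upgrades to: on a Zariski-open $U\subset Y$, \emph{every} point lies on a homology leaf meeting $Y$ in dimension $\ge r$. Mixed Ax--Schanuel for $\bA_g$ (Gao) is then applied pointwise to these atypical leaf-intersections, yielding at each point a weakly special $V$ with the structure of Lemma~\ref{weaklyspecials}. A countability argument on decompositions of the abelian scheme itself (not on slices of the base) globalizes this to $A=B+D$ with $D$ pulled back from an abelian scheme $E\to T$ along a quotient $\psi:S\to T$. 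RMM over $\Qbar$ is finally applied to the image $\overline{\psi(Y)}\subset E$, and a direct dimension count contradicts the Ax--Schanuel inequality. So the paper uses Ax--Schanuel as the primary structural input \emph{before} invoking RMM on a quotient, whereas your plan invokes RMM first on slices and defers the hard structural step to an unspecified rigidity argument afterward.
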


We begin by showing that it is sufficient to consider our main conjecture for $\Qbar$ varieties - which is our main motivation for generalizaing to this case in the first place. This idea is already present in \cite{CMZ}.

\begin{lemma}
 Conjecture \ref{conj:main} over $\C$ follows from the $\Qbar$ case.
\end{lemma}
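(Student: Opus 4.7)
The plan is to use a standard specialization argument: spread the data over a $\Qbar$-variety $T$, transfer the Zariski-density of torsion components of $Y$ to a very general $\Qbar$-fiber, apply the $\Qbar$ case of Conjecture \ref{conj:main} there, and patch the resulting torsion cosets back to $\C$ using countability of Hilbert-type parameter spaces.

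For the setup, I would descend $(\pi : A \to S, Y)$ to a finitely generated $\Qbar$-subfield of $\C$, written as $\Qbar(T)$ for an irreducible $\Qbar$-variety $T$. Spreading out produces a family $\widetilde{\pi} : \widetilde{A} \to \widetilde{S} \to T$ with an irreducible subvariety $\widetilde{Y} \subset \widetilde{A}$ dominating $\widetilde{S}$, whose fibers over $T$ have the same relative dimensions as $A, S, Y$. The original $\C$-situation corresponds to a $\C$-point $\tau_\C : \operatorname{Spec}\C \to T$ whose image is not contained in any proper $\Qbar$-subvariety of $T$ (otherwise $\dim T = 0$ and the statement is already over $\Qbar$). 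In particular the dimensional inequality $r + \dim A_t > \dim Y_t + \dim S_t$ persists on every $\Qbar$-fiber $t \in T(\Qbar)$.

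Next I would transfer the density hypothesis to a generic $\Qbar$-fiber. The irreducible families of torsion subvarieties of $\widetilde{Y}$ of fiber dimension at least $r$ over $T$ form a countable disjoint union of components of suitable relative Hilbert schemes, indexed by torsion orders and incidences with abelian subschemes of $\widetilde{A}$. Because $\tau_\C$ avoids every proper $\Qbar$-subvariety of $T$, each torsion component of $Y$ of dimension $\geq r$ is the $\tau_\C$-specialization of a family that \emph{dominates} $T$. The Zariski-density hypothesis over $\C$ then amounts to the existence of countably many dominating families $\mathcal{V}_i \subset \widetilde{Y}$ whose union is Zariski-dense in $\widetilde{Y}$. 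Choosing $t_0 \in T(\Qbar)$ outside the countable union of proper subvarieties recording the failure of any $\mathcal{V}_i$ to have the expected fiber dimension or of the total union to remain dense, one obtains a $\Qbar$-fiber $Y_{t_0}$ containing a Zariski-dense set of torsion subvarieties $(\mathcal{V}_i)_{t_0}$ of dimension $\geq r$. I would then apply the $\Qbar$ case of Conjecture \ref{conj:main} to $(A_{t_0}, Y_{t_0})$ to obtain a proper torsion coset $C_{t_0} \subset A_{t_0}$ containing $Y_{t_0}$. Fiberwise torsion cosets inside $\widetilde{A} \to T$ are themselves parametrized by a countable disjoint union of irreducible $\Qbar$-families $\mathcal{C}_j \subset \widetilde{A}$, so a single family $\mathcal{C}=\mathcal{C}_j$ satisfies $Y_{t_0} \subset \mathcal{C}_{t_0}$ for a Zariski-dense set of $t_0 \in T(\Qbar)$. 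Since $\widetilde{Y} \subset \mathcal{C}$ is a closed condition that holds on a Zariski-dense subset of $T$, it holds identically, and specializing along $\tau_\C$ gives a proper torsion coset in $A$ containing $Y$.

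I expect the main obstacle to be the density-descent in the preceding paragraph: one must rigorously argue that only the dominating $\Qbar$-families of torsion subvarieties contribute to the $\C$-fiber (using that $\tau_\C$ is very general) and that their union remains Zariski-dense in a generic $\Qbar$-fiber (using countability to exclude bad loci in $T$). Once those two points are cleanly established, applying the $\Qbar$ conjecture and patching the torsion cosets is formal, requiring only countability of Hilbert-type parameter spaces and no transcendence input such as Ax--Schanuel or Pila--Wilkie.
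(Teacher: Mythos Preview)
Your approach diverges from the paper's and, as written, has real gaps. The paper does \emph{not} specialize to a closed $\Qbar$-point of $T$: after spreading out to $A'\to S'\to T$ and $Y'\subset A'$ over $\Qbar$, each torsion component $s_i\subset Y$ of dimension $\ge r$ spreads to a torsion subvariety $S_i\subset Y'$ of dimension $\ge r+\dim T$, while $\dim A',\dim S',\dim Y'$ each exceed their unprimed analogues by $\dim T$. One then applies Conjecture~\ref{conj:main} over $\Qbar$ to the \emph{total space} $(A',S',Y')$ with $r$ replaced by $r+\dim T$; the numerical hypothesis is preserved, the $S_i$ witness Zariski-density of $(Y')_{\mathrm{tor}}^{\ge r+\dim T}$, and the conclusion for $Y'$ restricts over the generic point of $T$ to give the conclusion for $Y$. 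This dimension shift is precisely why the conjecture was formulated with a free parameter $r$ rather than only $r=0$.

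Your specialization route hits two concrete obstacles. First, you ask for $t_0\in T(\Qbar)$ outside a countable union of proper $\Qbar$-subvarieties, but $T(\Qbar)$ is itself countable and such a union can exhaust it; worse, density of $\bigcup_i(\mathcal V_i)_{t_0}$ in $Y_{t_0}$ can genuinely fail at \emph{every} $\Qbar$-point while holding at the generic point (for instance, the sections $y=\prod_{j\le n}(t-\alpha_j)$ of $\bA^2\to\bA^1$, with $\{\alpha_j\}$ an enumeration of $\Qbar$, are Zariski-dense in $\bA^2$, yet over any $t_0\in\Qbar$ all but finitely many collapse to $0$). Second, your patching pigeonhole is invalid: a Zariski-dense countable subset of $T(\Qbar)$ can be partitioned into countably many nowhere-dense pieces (e.g.\ $\Qbar^2=\bigcup_{\alpha\in\Qbar}\{\alpha\}\times\Qbar$), so even if each good $t_0$ produced some $\mathcal C_{j(t_0)}$ with $Y_{t_0}\subset(\mathcal C_{j(t_0)})_{t_0}$, no single $\mathcal C_j$ need work on a Zariski-dense set, and you cannot conclude $\widetilde Y\subset\mathcal C$. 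The paper's total-space argument bypasses both issues entirely.
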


\begin{proof}

To prove the reduction, we spread out to $\Qbar$-varieties $\pi':A'\ra S'\ra T$ with $Y'\subset A'$ whose generic point over $T$ gives $\pi$. Now suppose that the theorem is false. Then there exist a zariski-dense set of varieties $s_i$ inside $Y_{tor}^{\geq r}$. Now these spread out to get a set varieties $S_i\subset Y$ whose fibers over the generic points of $T$ are the $s_i$, and such that $S_i\ra T$ is dominant and quasi-finite. Note that the dimensions of $S',A',S_i'$ are all $\dim T$ higher than what they were for their un-primed analogues. Thus replacing $r$ by $r+\dim T$ and applying the $\Qbar$ result yields that $Y'$ is contained in a proper torsion coset of $A'$, and taking the fiber over the genetic point of $T$ yields the same for $Y$.

\end{proof}

Henceforth we focus on proving Theorem \ref{thm:main} for $A,S,Y$ defined over $\Qbar$. WLOG we may assume that $A$ is principally polarized, as the problem is invariant under etale base change and isogenies.

\subsection{A finite-dimensional family of leaves}

Let $L=R^1\pi_*\Z$. Then $L$ is a local system over $S^{\an}$ underlying a variation of Hodge structures of weight 1. Now corresponding to the principl polarization on $A$ there is a mixed variation $E$ of over $A$ of weights $0,1$ which fits into an exact sequence $$0\ra \pi^*L^\vee\ra E\ra \Z(0)\ra 0.$$ This can be constructed as follows: We note that $\underline{Ext}^1(A,\bG_m)\cong A^\vee\cong A$ and so there is a `universal' group scheme $\tau:G\ra A$ which is an extension of  $A$ by $\G_m$, considered as constant group schemes over $A$. We take $E:=R^1\tau_*\Z(0)$.

Within $E_\cO$ there is a sub-bundle $F^0E$ which contains $\pi^*F^0L^\vee$ and also a lift of $\C(0)\otimes_\C\cO$. For a point $a\in A$ , if we let $e$ be a lift of a generator of $\Z(0)_a$ to $E_{a,\Z}$, then $e+l\in F^0E$ for $l\in \pi^*L^\vee$ implies that $l$ maps to $a$ under the natural universal covering map $L^{\vee,\an}\ra A^{\an}$. We let $E_1\subset E$ denote the closed subvariety of elements which map to $1\in\Z(0)$. 

Now let $\phi:E\ra A$ denote the map from the total space of $E$ and consider the splitting $\phi^*TA\xrightarrow{\sigma} TE$ given by the connection, locally cutting out the locus of a flat vector. Now we consider $$\cF_E:=\big(\sigma(\phi^*(TA))\cap TF^0E\big)\mid E_1.$$ In other words, $\cF_E$ is the tangent directions in $E_1$ along which some vector stays flat, and stays in $F^0$. By the description above, this means that once we fix the vector it determines its image in $A$. In other words, $\cF_E$ is integrable and yields a foliation of $E_1$ whose leaves are locally covering spaces over $S$. We shall be interested in the images of these leaves in $A$. 

In local co-ordinates, let $\ell\subset L^{\vee}$ be a flat section, which is to say a locally constant section of the first relative homology of $A$ over $S$. Then $\ell$ has an image in $A$ and this image is also the image of  the leaf in $\cF_E$ corresponding to $e+l$.

Importantly for us, for each $N$ all the components of $A[N]$ are images of leaves corresponding to the vectors in $\frac1NE_{1,\Z}$.

We call an image in $A$ of a leaf from $\cF_E$ a \emph{homology leaf}. 

\subsection{The foliation is unlikely everywhere}

\begin{lemma}\label{lem:everypoint?}
There is a Zariski-open subset $U\subset Y$ such that all points of $U$ have a homology leaf through them which intersects $Y$ in dimension at least $r$.
\end{lemma}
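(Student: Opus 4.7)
The plan is to produce an algebraic subvariety $\tilde Z\subseteq \tilde Y:=\Phi^{-1}(Y)\subseteq E_1$ contained in the upstairs good locus
\[
\tilde W := \{e\in\tilde Y : \dim_e(\mathcal L_e\cap\tilde Y)\geq r\},
\]
so that $\Phi(\tilde Z)\subseteq W$ is constructible and Zariski-dense in $Y$, hence contains a Zariski-open $U$ by Chevalley.

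Each irreducible component $V$ of $Y_{tor}^{\geq r}$ sits inside an irreducible component of some $A[N]$, which by the paper's construction equals $L_V=\Phi(\mathcal L_V)$ for an algebraic leaf $\mathcal L_V$ of $\cF_E$ through a vector in $\tfrac1N E_{1,\Z}$ (the leaf is algebraic because it is an \'etale section of the algebraic torsor bundle $E_1\to A$ over $L_V$, and \'etale covers of algebraic varieties are algebraic). Thus $\tilde V:=\Phi^{-1}(V)\cap\mathcal L_V\subseteq\tilde Y$ is algebraic of dimension $\dim V\geq r$, and lies in $\tilde W$ since at each $e\in \tilde V$ the unique leaf through $e$ is $\mathcal L_V$, which meets $\tilde Y$ in dimension $\geq r$. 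Setting $\tilde Z:=\overline{\bigcup_V \tilde V}^{\,\mathrm{Zar}}$ gives an algebraic subvariety with $\Phi(\tilde Z)\supseteq \bigcup_V V$ Zariski-dense in $Y$, so $\Phi(\tilde Z)$ is a constructible Zariski-dense subset of $Y$ and hence contains a Zariski-open $U\subseteq Y$.

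The hard step is to upgrade $\bigcup_V\tilde V\subseteq \tilde W$ to $\tilde Z\subseteq \tilde W$, so that $U\subseteq \Phi(\tilde Z)\subseteq W$. The function $e\mapsto\dim_e(\mathcal L_e\cap\tilde Y)$ is upper semi-continuous in the analytic topology (locally the foliation trivializes and the assertion reduces to upper semi-continuity of fiber dimension for a holomorphic submersion), so $\tilde W$ is analytically closed; but Zariski density in an algebraic variety does not in general imply analytic density (e.g.\ $\bigcup_n\{y=nx\}\subseteq\C^2$ is Zariski-dense yet analytically meagre), so a direct closure argument fails. To bypass this I would invoke Lemma~\ref{L.Tsimerman} (the Bakker-Tsimerman Ax-Schanuel): the Zariski-dense family of unlikely intersections $\{\tilde V\}_V$ forces the existence of an irreducible component of $\tilde Z$ on which an algebraic dependence between periods and elliptic logarithms of the $\sigma_i$ holds with too-small codimension relative to the period-space ambient, and Ax-Schanuel then produces an algebraic cause propagating the unlikely-intersection condition throughout that component; verifying that this component is entirely contained in $\tilde W$ and projects dominantly onto $Y$ under $\Phi$ is the technical heart of the proof, and is the step where I anticipate the bulk of the work.
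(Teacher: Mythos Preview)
Your setup is correct but you miss the one observation that makes the lemma immediate, and then try to compensate with heavy machinery that you do not actually carry out.

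You correctly pass to the upstairs locus $\tilde W=\{e\in\tilde Y:\dim_e(\mathcal L_e\cap\tilde Y)\ge r\}$ and note that $e\mapsto\dim_e(\mathcal L_e\cap\tilde Y)$ is upper semi-continuous. But you only argue this in the \emph{analytic} topology, and this is precisely where you go astray. The foliation $\cF_E$ is \emph{algebraic}: it is cut out inside $TE_1$ by the algebraic connection together with the algebraic Hodge filtration. Hence the \emph{formal} leaf $\widehat{\mathcal L}_e$ at any point is determined by purely algebraic data, and $\dim_e(\mathcal L_e\cap\tilde Y)$ equals the dimension of the formal intersection $\widehat{\mathcal L}_e\cap\widehat{\tilde Y}_e$. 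This quantity is therefore upper semi-continuous in the \emph{Zariski} topology, so $\tilde W$ is already Zariski-closed in $\tilde Y$. Now $W=\Phi(\tilde W)$ is constructible by Chevalley, contains the Zariski-dense set $Y_{tor}^{\ge r}$, and hence contains a Zariski-open $U$. That is the paper's entire proof.

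Everything in your second and third paragraphs --- constructing $\tilde Z$ as a Zariski closure of lifted torsion components, worrying that Zariski density does not give analytic density, and then invoking Ax--Schanuel to repair this --- is unnecessary once you know $\tilde W$ is Zariski-closed. Your proposed Ax--Schanuel fix is also not a proof as it stands: Lemma~\ref{L.Tsimerman} is stated for elliptic schemes with sections rather than the general principally polarized abelian setting of the appendix, and even with the correct mixed Ax--Schanuel input you give no argument for why the resulting ``algebraic cause'' forces containment in $\tilde W$ (as opposed to merely producing some weakly special subvariety), nor why the relevant component of $\tilde Z$ dominates $Y$. You yourself flag this as ``the step where I anticipate the bulk of the work''; in fact that work is avoided entirely by the formal-scheme observation above.
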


\begin{proof}

The set of points in question can be described algebraically. Note that while it is more intuitive to think of $F$ as a germ, the dimension condition can be checked at the level of formal schemes and is therefore algebraic, and thus the locus is constructible by Chevalleys theorem . Now this includes all of $Y_{tor}^{\geq r}$, which by assumption is Zariski-dense in $Y$. Any Zariski-dense constructible set must contain a Zariski open subset, and thus the result follows.

\end{proof}

\subsection{Unlikely Intersections in Mixed Shimura Varieties}

Let $\bA_g$ denote the universal principally polarized Abelian variety of dimenion $g$. This sits over $\cA_g$ - the moduli space of principally polarized abelian varieties. Now we have the following convenient lemma from \cite[Proposition 1.1]{Gao1}

\begin{lemma}\label{weaklyspecials}

Let $W\subset \bA_g$ denote a weakly special subvariety. Then $W$ lies a weakly special $W'\subset \cA_g$ and there exists a decomposition $\bA_g\mid W \sim A\oplus B\oplus C$ such that $C$ is isotrivial, and $W =A + (0,b,c)$ where $b\in B(\ol{W'})$ is a torsion section and $c\in C(W')$ is a constant section. 

\end{lemma}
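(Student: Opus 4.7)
The plan is to deduce this structure theorem from the classification of weakly special subvarieties of mixed Shimura varieties, using that $\bA_g$ is the mixed Shimura variety associated to the group $\mathrm{GSp}_{2g}\ltimes \bV_{2g}$, where $\bV_{2g}$ is the standard representation. A weakly special subvariety corresponds to a sub-Shimura datum that respects this semi-direct product, and the lemma is essentially a translation of that description into the language of abelian schemes.

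First I would project $W$ via the structure map $\pi:\bA_g\to\cA_g$ to obtain $W':=\pi(W)$. By the functoriality of weakly special subvarieties under morphisms of mixed Shimura varieties (since $\pi$ is induced by the projection of the underlying groups onto their reductive quotient $\mathrm{GSp}_{2g}$), $W'$ is a weakly special subvariety of the pure Shimura variety $\cA_g$. Note that $W$ dominates $W'$ by construction, so the restriction $\bA_g|_{W'}$ is an abelian scheme over $W'$ containing $W$.

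Second, I would decompose the abelian scheme $\bA_g|_{W'}$ up to isogeny via Deligne's semisimplicity theorem for polarizable variations of Hodge structure (applied to the smooth part of $W'$, using that $W'$ is normal and the decomposition extends). The maximal isotrivial isotypic component gives the factor $C$, so $\bA_g|_{W'}\sim (A\oplus B)\oplus C$ with $A\oplus B$ having no isotrivial factor. The distinction between $A$ and $B$ is then dictated by $W$ itself: $W$ is, by the Pink classification, the translate of a constant family of abelian subschemes by a section, and I take $A$ to be the abelian subscheme whose fibres are the irreducible components of the $W$-translates, and $B$ a complement of $A$ in the non-isotrivial part.

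Third, I would analyze the translation. Writing $W=A+s$ for some section $s$ of $(B\oplus C)|_{W'}$ (over a finite cover), I need to show that the $B$-component of $s$ is torsion and the $C$-component is constant. The constancy on $C$ is automatic because $C$ is isotrivial and weakly special sections of an isotrivial family correspond to points of the constant abelian variety. The torsion property on $B$ is the main obstacle: it comes from the fact that a weakly special subvariety is, by definition, cut out by $\Q$-conditions in the underlying group of the Shimura datum, and the $\Q$-points of the unipotent radical of $\mathrm{GSp}_{2g}\ltimes \bV_{2g}$ that produce a well-defined section of a non-isotrivial quotient must project to torsion. Equivalently, I would argue that the Betti map of $s|_B$ is constant along $W'$ (since $W$ is preserved by the monodromy of $W'$ acting on $\bA_g|_{W'}$), and combine this with the non-isotriviality of $B$ to conclude that the image of $s$ in $B$ is torsion, e.g.\ via non-vanishing of the derivative of the Betti map on the non-isotrivial locus. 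The rest of the argument is bookkeeping to ensure the cover $\ol{W'}\to W'$ on which $b$ is defined can be taken finite and étale.
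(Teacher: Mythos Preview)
The paper does not prove this lemma at all; it is quoted from \cite[Proposition~1.1]{Gao1}, so there is no in-paper argument for you to match. Your outline follows the same route Gao takes---classifying weakly special subvarieties of $\bA_g$ through the mixed Shimura datum for $\mathrm{GSp}_{2g}\ltimes V_{2g}$---and your first two steps (projecting to a weakly special $W'\subset\cA_g$, then decomposing $\bA_g|_{W'}$ up to isogeny via Deligne semisimplicity and isolating the isotrivial factor $C$) are fine.

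The genuine gap is in your third step, the torsion of the $B$-component of the translating section. You offer two justifications that do not fit together. The first---that weakly special subvarieties come from $\Q$-conditions on the group---is the correct mechanism, but you leave it as a slogan. The second is self-contradictory: you assert that the Betti map of $s|_B$ is constant along $W'$ and then propose to deduce torsion ``via non-vanishing of the derivative of the Betti map on the non-isotrivial locus''; a constant map has vanishing derivative, so that cannot be the argument. What actually works is this: the lift of $W$ to the uniformizing space is an orbit of a connected real algebraic subgroup, so the Betti coordinates $\beta\in\R^{2d}/\Z^{2d}$ of $s|_B$ are indeed locally constant; monodromy-invariance of the algebraic section $s$ then forces $(\gamma-I)\beta\in\Z^{2d}$ for every $\gamma$ in the monodromy group of $B$ over $W'$; since $B$ has no isotrivial isogeny factor, this monodromy has no nonzero invariants on $H_1(B_{\mathrm{fib}},\Q)$, hence some $\gamma-I$ is invertible over $\Q$, giving $\beta\in\Q^{2d}$ and thus $s|_B$ torsion. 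That is the actual content behind your $\Q$-structure remark, and it is how the classification in \cite{Gao1} is carried out.
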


Now, we apply the mixed Ax-Schanuel theorem of \cite{GaoAS}. Let $W$ be the weakly special closure of the image of $A$ in $\cA_g$ with image $W'\subset \cA_g$, and note that the Hodge leaves are pullbacks of algebraic sets from the universal cover of $\cA_g$. Then \cite[Thm 1.1]{GaoAS} tells us that at every point $y\in Y$ and Hodge leaf $L$ which intersects $Y$ in dimension at least $r$, there exists a weakly special $V\subset W$ whose pre-image in $A$ is a subvariety $R\supset Y\cap L$ with image $R'\subset S$, such that 
\begin{equation}\label{ASrelation}
\dim Y\cap R + \dim V'\geq \dim V + r.
\end{equation}

Since $\dim Y\cap R \leq \dim Y$ it must follows that $\dim V'-\dim V< \dim W'-\dim W=\dim A$, and thus by \ref{weaklyspecials} that $A\mid V'$ must decompose as $B+C+D$ where $D$ is isotrivial and $V\subset B+c+d$ where $c$ is torsion, such that $B$ a proper subvariety of $A\mid V'$. 

Now, since this decomposition occurs at every point of $y\in U$, and since there are only countably many decompositions of an abelian variety, it follows that there is a global decomposition $A = B+C+D$ whose restrictions to the $V'$ give the decomposition above. Now, since we only obtain torsion points of $C$, torsion points can't move continuuosly, and $Y$ is not contained in a proper weakly special of $A$, it follows that $C=0$. That means there exists a global decomposition $A=B+D$ where $D$ is isotrivial along the $V'$ fibers , and all the intersections lie over a constant section of $D\mid V'$.

We now complete the proof of Theorem \ref{thm:main}. Let $\psi:S\ra T$ denote the quotient of $S$ whose fibers are the $V'$, so that $D$ is the basechange of $E\ra T$ along $\psi$. Consider $Z
:=\ol{\psi(Y)}$.  Note that torsion of $Z$ must be Zariski dense in $Z$ and thus by the RMM over $\Qbar$ it follows that

%Now we have a map $\psi:S\ra T$ such that $D$ descends to $E$ over $T$ and $D=E\times_T S$. Note that if $Y$ surjects onto $E$ then we have a contradiction because expected intersection dimension still does not drop. 
%Now consider the image $Z$ of $Y$ in $E$. Note that torsion of $Z$ must be dense in $Z$, so by the RMM/$\Qbar$ it follows that 

\begin{equation}\label{Dineq}
\dim Z + \dim T \geq \dim E. 
\end{equation}

Now let $d\in E$ be the image of $y\in Y$. By the description above it follows that $R$ lies entirely within the preimage of $d$, and thus that $Y\cap R\subset Y_d$. Thuus we obtain
\begin{align*}
\dim Y\cap R-\dim V+ \dim V'&\leq \dim Y_d -\dim (B/S)\\
&=\dim Y-\dim Z-\dim (B/S)\\
&\leq \dim Y-\dim E + \dim T -\dim (B/S)\\
&=\dim Y -\dim (A/S)\\
&< r.
\end{align*}
contradicting \eqref{ASrelation}. This completes the proof.

\bigskip

\noindent{\bf Acknowledgments}. The authors are very grateful to Serge Cantat and Romain Dujardin for helpful discussions on the topics of the present work and for sending them a first version of their inspiring paper \cite{CD}.
We also thank Philipp Habegger for an e-amil exchange and for sharing with us a related recent preprint with Z. Gao.

  %%% QUI

\bigskip
%%%%%%%%%%%% Authors' addresses %%%%%%%%%%%%%

\address{% First Author
Pietro Corvaja\\
Dipartimento di Scienze Matematiche e Informatiche e Fisiche\\
Universit\`a di Udine\\
Via delle Scienze, 206\\
33100 Udine  \\
Italy
}
{pietro.corvaja@dimi.uniud.it}
%%%%%%%%%
 
 \address{% First Author
Jacob Tsimerman\\
Department of Mathematics\\
University of Toronto\\
Canada
}
{jacobt@math.toronto.edu}

\address{% Third Author
Umberto Zannier\\
Scuola Normale Superiore\\
Piazza dei Cavalieri, 7\\
56126 Pisa \\
Italy
}
{umberto.zannier@sns.it}
%%%%%%%%%

\end{document}